\tikzstyle{V}=[fill=black,circle,scale=0.4, outer sep = 4pt]
\newtheorem{thm}{Theorem}[section]
\newtheorem{prop}[thm]{Proposition}
\newtheorem{cor}[thm]{Corollary}
\newtheorem{lemma}[thm]{Lemma}
\theoremstyle{remark}
\newtheorem{example}[thm]{Example}
\theoremstyle{definition}
\newtheorem{defn}[thm]{Definition}
\DeclareMathOperator{\supp}{supp}
\renewcommand{\2}{^{(2)}}
\newcommand{\inv}{^{-1}}
\newcommand{\bi}{\begin{itemize}}
\newcommand{\ei}{\end{itemize}}
\newcommand{\be}{\begin{enumerate}}
\newcommand{\ee}{\end{enumerate}}
\newcommand{\T}{\mathbb{T}}
\renewcommand{\H}{\mathcal{H}}
\newcommand{\G}{\mathcal{G}}
\newcommand{\R}{\mathbb{R}}
\newcommand{\N}{\mathbb{N}}
\newcommand{\Z}{\mathbb{Z}}
\begin{document}

\title{$K$-theory and Homotopies of 2-cocycles on Group Bundles}
\author{Elizabeth Gillaspy}
\date{\today}
\maketitle

\begin{abstract}
This paper continues the author's program to investigate the question of when a homotopy of 2-cocycles $\Omega = \{\omega_t\}_{t \in [0,1]}$ on a locally compact Hausdorff groupoid $\G$ induces an isomorphism of the $K$-theory groups of the  twisted groupoid $C^*$-algebras:
\[K_*(C^*(\G, \omega_0)) \cong K_*(C^*(\G, \omega_1)).\]
Building on our earlier work in \cite{transf-gps, eag-kgraph}, we show that if $\pi: \G \to M$ is a locally trivial bundle of amenable groups over a locally compact Hausdorff space $M$, a homotopy $\Omega = \{\omega_t\}_{t \in [0,1]}$ of 2-cocycles on $\G $ gives rise to an isomorphism 
\[K_*(C^*(\G, \omega_0)) \cong K_*(C^*(\G, \omega_1)).\]

{\it Keywords:} Twisted groupoid $C^*$-algebra, $K$-theory, group bundle, 2-cocycle.

{\sc MSC (2010):} 46L80, 46L55
\end{abstract}

\section{Introduction}
Let $\pi: V \to M$ be a real or complex vector bundle over a manifold $M$.  A bilinear 2-form $\sigma: V\2 \to \R$ induces a \emph{homotopy of 2-cocycles} $\{\omega_t\}_{t \in[0,1]}$ on $V$: If $\pi(v) = \pi(w)$, define 
\[\omega_t(v,w) := e^{2\pi i t \sigma(v,w)}.\]
Plymen proved in Theorem 1 of \cite{plymen-weyl-bdl} that when $V$ is an even-dimensional real vector bundle and $\sigma$ is a symplectic 2-form on $V$, the twisted $C^*$-algebra of the vector bundle $C^*(V, \omega_1)$ is a continuous-trace $C^*$-algebra  over $M$, with trivial Dixmier-Douady class, and hence is Morita equivalent to $C_0(M)$.  

Furthermore, applying a fiberwise Fourier transform to $V$, one sees immediately that
\[C^*(V, \omega_0) = C^*(V) \cong C_0(V^*),\]
where $V^*$ is the dual bundle to $V$.  Since $V$ is even-dimensional, the Thom isomorphism in $K$-theory tells us that 
\[K_*(C_0(V^*)) \cong K_*(C_0(M));\]
consequently, Plymen's result implies that the homotopy of 2-cocycles $\{\omega_t\}_{t\in[0,1]}$ associated to a symplectic form $\sigma$ induces an isomorphism
\[K_*(C^*(V, \omega_0)) \cong K_*(C^*(V, \omega_1)).\]

In this article, we present a substantial generalization of this result:
\newtheorem*{main}{Corollary \ref{main-cor}}
\begin{main}
Let  $\{\omega_t\}_{t\in[0,1]}$ be a homotopy of 2-cocycles on a second countable, locally trivial, locally compact Hausdorff group bundle $\pi: \G \to M$, such that the fiber group $G = \pi\inv(m)$ is amenable. Then the homotopy induces an isomorphism
\[K_*(C^*(\G, \omega_0)) \cong K_*(C^*(\G, \omega_1))\]
of the $K$-theory groups of the twisted $C^*$-algebras.
\end{main}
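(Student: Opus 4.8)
The plan is to assemble the homotopy into a single twisted groupoid algebra over the parameter interval and then, using the $C_0(M)$-algebra structure, to reduce the whole problem to a fiberwise statement about one amenable group that is already available from the earlier work. First, I would form the group bundle $\mathcal{H} := \G \times [0,1]$ over $M \times [0,1]$, declaring two elements composable only when they share the same parameter $t$, and equip it with the $2$-cocycle $\omega((g,t),(h,t)) := \omega_t(g,h)$; continuity of the homotopy makes $\omega$ a genuine element of $Z^2(\mathcal{H},\T)$. Since $\G$ is a bundle of amenable groups, $\mathcal{H}$ is an amenable groupoid, so $A := C^*(\mathcal{H},\omega)$ is nuclear and its full and reduced forms coincide.

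Next I would recognize $A$ as a $C([0,1])$-algebra. Because $\mathcal{H}$ is a \emph{group} bundle (range equals source), $C_0(M\times[0,1])$ embeds centrally in the multiplier algebra $M(A)$, so $A$ is a $C_0(M\times[0,1])$-algebra whose fiber over $(m,t)$ is the single-group twisted algebra $C^*(G,\omega_{(m,t)})$. Restricting attention to the interval exhibits $A$ as a $C([0,1])$-algebra with fibers $A_t \cong C^*(\G,\omega_t)$, and amenability together with continuity of $\omega$ makes this a continuous, exact field. It then suffices to prove that each evaluation map $\mathrm{ev}_t : A \to A_t$ is a $K$-equivalence, for then $K_*(C^*(\G,\omega_0)) \cong K_*(A) \cong K_*(C^*(\G,\omega_1))$.

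To show $\mathrm{ev}_t$ is a $K$-equivalence I would view it as a $C_0(M)$-linear homomorphism and check it fiberwise over $M$. Over $m \in M$ the fiber of $A$ is the $C([0,1])$-algebra $A^{(m)}$ built from the single-group homotopy $\{\omega_{(m,\cdot)}\}$ on the amenable group $G$, and the fiber of $\mathrm{ev}_t$ is the corresponding evaluation $A^{(m)} \to C^*(G,\omega_{(m,t)})$. That these single-group evaluation maps are $KK$-equivalences is exactly the homotopy-invariance input supplied by \cite{transf-gps, eag-kgraph} (and, in the motivating case $G=\Z^n$, by the classical rigidity of the $K$-theory of noncommutative tori). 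Local triviality, through the honest identification $\G|_U \cong U \times G$, shows that these fibers are laid out continuously over $M$, so that a fiberwise-$KK$-equivalence theorem of Dadarlat type can be brought to bear.

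The main obstacle is precisely this last passage: a $C_0(M)$-linear map that is a $KK$-equivalence on every fiber need not be a global $KK$- or $K$-equivalence without additional control, and the available fiberwise-$KK$-equivalence theorems impose hypotheses on $M$ (separability, and typically finite covering dimension or compactness). The real work is therefore to exploit amenability --- for nuclearity and a continuous, exact field --- together with local triviality --- for the reduction to a single group and for a well-behaved trivializing cover, available since $M$ is second countable and locally compact --- in order to meet these hypotheses; failing a single global application, one would instead glue the local $\mathcal{R}KK(U;-,-)$-equivalences across the cover by a Mayer--Vietoris induction, checking that the local isomorphisms are natural enough to patch. Once $\mathrm{ev}_t$ is known to be a $K$-equivalence, the endpoint isomorphism is immediate.
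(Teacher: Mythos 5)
Your global skeleton matches the paper's: assemble the homotopy into the bundle $\G\times[0,1]\to M\times[0,1]$ with cocycle $\Omega$, and prove that each evaluation map $Q_t\colon C^*(\G\times[0,1],\Omega)\to C^*(\G,\omega_t)$ is a $K$-theory isomorphism. But your central step is missing, and as stated it rests on inputs that do not exist. You reduce to the point-fibers over $m\in M$ and claim the single-group evaluation maps are $KK$-equivalences ``exactly'' by the earlier work \cite{transf-gps,eag-kgraph}; those results produce $K$-theory isomorphisms (via Baum--Connes/going-down machinery), not $KK$- or $\mathcal{R}KK$-equivalences, so the fiberwise hypothesis of a Dadarlat-type theorem is not supplied by anything you cite. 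Moreover, such fiberwise-to-global theorems impose finite covering dimension (or similar) on the base, which is not among the hypotheses here: $M$ is only second countable, locally compact Hausdorff. You acknowledge both problems and defer them as ``the real work,'' but that deferred work is precisely the content of the theorem; the proposal as written is a plan whose key step is open.

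The way to close the gap --- and it is the paper's route --- is never to descend to point-fibers and never to leave $K$-theory. Local triviality gives compact patches $V$ with $\G|_V\cong V\times G$, so $\G|_V\times[0,1]$ is literally a transformation group ($G$ acting trivially on $V\times[0,1]$), and Theorem 5.1 of \cite{transf-gps} applies to the whole patch at once (amenable groups satisfy Baum--Connes with coefficients), giving that $Q_t$ is a $K$-isomorphism on $C^*(\G|_V\times[0,1],\Omega)$, not merely on each point-fiber. One then needs only: (i) the identification $C^*(\G,\omega)_F\cong C^*(\G|_F,\omega)$ for closed $F\subseteq M$, which is genuinely nontrivial for the universal completion (one must factor $I$-norm-bounded representations through restriction) and occupies the paper's Section 2, whereas you assert the analogous fiber statement in one line; (ii) the Mayer--Vietoris sequence $0\to A_{F_1\cup F_2}\to A_{F_1}\oplus A_{F_2}\to A_{F_1\cap F_2}\to 0$ together with the Five Lemma, which patches $K$-isomorphisms over finite unions of patches and settles compact $M$; and (iii) for non-compact $M$, a decomposition $M=F_1\cup F_2$ with each $F_i$ a disjoint union of countably many compacta, so that continuity of $K$-theory under direct sums reduces everything to the compact case. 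Your Mayer--Vietoris fallback gestures at (ii), but by formulating it as patching of $\mathcal{R}KK(U;-,-)$-equivalences you again demand an input strictly stronger than what is available, and you leave the infinite-cover (non-compact $M$) issue untouched; run the induction at the level of $K$-groups with the transformation-group input above and the argument closes.
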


While the motivation (and main applications) of this result arise from considering vector bundles over manifolds, the proofs are no simpler in this special case.  Consequently, we present the results here in their full generality.

Group bundles are examples of  groupoids; the results of this article thus continue the author's research program, begun in \cite{transf-gps,eag-kgraph}, to investigate the question of when a homotopy $\{\omega_t\}_{t\in[0,1]}$ of 2-cocycles on a groupoid $\G$ induces an isomorphism 
\begin{equation}
K_*(C^*(\G, \omega_0)) \cong K_*(C^*(\G, \omega_1))\label{kthy-isom}
\end{equation}
of the $K$-theory groups of the twisted groupoid $C^*$-algebras.  This question was inspired by the realization that Bott periodicity and the noncommutative tori can both be viewed as examples  of a $K$-theoretic isomorphism arising from a homotopy of 2-cocycles.  We hope that our investigation of the question of when a homotopy $\{\omega_t\}_{t\in[0,1]}$ of 2-cocycles on a groupoid $\G$ induces the $K$-theoretic isomorphism 
 \eqref{kthy-isom} will increase our understanding of the structure of (twisted) groupoid $C^*$-algebras, as well as shedding light on questions in $C^*$-algebraic classification and string theory.

The study of the full and reduced $C^*$-algebras $C^*(\G), C_r^*(\G)$ associated to a locally compact groupoid $\G$ was initiated by Jean Renault in \cite{renault}, and has since been pursued actively by many researchers.
  Although Renault also defined the twisted groupoid $C^*$-algebras $C^*(\G, \omega), C^*_r(\G, \omega)$ for a 2-cocycle $\omega \in Z^2(\G, \T)$ in \cite{renault}, these objects have received relatively little attention until recently. However, it has now become clear that twisted groupoid $C^*$-algebras can help answer many questions about  the structure of untwisted groupoid $C^*$-algebras (cf.~\cite{cts-trace-gpoid-II, cts-trace-gpoid-III, clark-aH, dd-fell, jon-astrid}), as well as classifying those $C^*$-algebras which admit diagonal subalgebras (also known as Cartan subalgebras) --- cf.~\cite{c*-diagonals}. 
    In another direction, \cite{TXLG} 
    shows how the $K$-theory of twisted groupoid $C^*$-algebras classifies $D$-brane charges in many flavors of string theory.

\subsection{Outline}
In addition to its philosophical links with \cite{transf-gps, eag-kgraph}, an attentive reader will notice similarities between the proofs presented in this article and several main results from \cite{transf-gps, eag-kgraph}. To be precise, we begin this article by  following the outline of the proof of Theorem 3.5 from \cite{eag-kgraph} to calculate the $C_0(M)$-algebra structure associated to  a locally trivial bundle of groups $\pi: \G \to M$.  Then we use the results of this calculation, together with  Theorem 5.1 from \cite{transf-gps} and a Mayer-Vietoris argument, to establish our main result in Theorem \ref{main}; Corollary \ref{main-cor} follows immediately.

{\bf Acknowledgments:} The author would like to thank Nigel Higson for helpful suggestions.  I am also indebted to my advisor, Erik van Erp, for his support during the writing of this article and throughout my graduate career.

\section{$C_0(M)$-algebra structure}
In this section, we describe the natural $C_0(M)$-algebra structure on $C^*(\G, \omega)$, where  $\pi: \G \to M$ is a locally trivial bundle of  groups over a locally compact Hausdorff space $M$, and $\omega$ is a 2-cocycle on $\G$.  In order to state our result more precisely, we begin with some definitions. 

We note that the following definition is non-standard in its requirement of local triviality; however, this hypothesis is necessary for the proofs of our later results, and is satisfied by our motivating example of a vector bundle.
\begin{defn}
An \emph{(amenable) group bundle} is a locally compact Hausdorff space
 $\G$ together with a continuous, open surjection $\pi: \G \to M$ onto a locally compact Hausdorff space $M$ such that $\G_m:= \pi\inv(m)$ is isomorphic to a fixed (amenable) group $G$ for every $m \in M$, and such that $\G$ is locally trivial: for every point $m \in M,$ there exists an open neighborhood $U$ of $m$ such that $\pi\inv(U)$ is homeomorphic to $ U \times G$.
\end{defn}

Given a group bundle $\pi: \G \to M$, write 
\[\G\2 = \{(x,y) \in \G \times \G: \pi(x) = \pi(y)\}.\]
  Note that $\G\2 \subseteq \G \times \G$ is a closed subspace; we equip it with the subspace topology. 
  
 Thanks to the isomorphism $\phi_m: \G_m \to G$, if $(x,y)\in \G\2$, then there is a unique element $z$ in $\G_m$ such that $\phi_m(z) =\phi_m(x)\phi_m(y)$. We will usually write $xy$ for this element.   Similarly, for each $x \in \G_m$ there is a unique element $x\inv \in \G_m$ such that  $x x\inv = x\inv x = \phi_m\inv(e)$. Moreover, the local triviality of $\G$   implies, in particular, that  $\phi_m: \G_m \to G$ is a homeomorphism for all $m \in M$.  As a consequence, the map $(x,y) \mapsto xy$ is continuous as a map $\G\2 \to \G$, and $x \mapsto x\inv$ is a continuous map from $\G$ to itself.  
 
Let $C_c(\G)$ denote the collection of those continuous complex-valued functions $f$  on the total space $\G$ of the group bundle such that $\supp f$ is compact, and let $\lambda$ be a fixed Haar measure on the fiber group $G \cong \G_m$ of the group bundle.   The local triviality of $\G$ then implies the following proposition:
 \begin{prop}
 \label{cts-int}
 Let $f \in C_c(\G)$. Then the function 
 \[m \mapsto \int_{\G_m} f(x) \, d\lambda(x)\]
 lies in $C_0(M)$.
 \end{prop}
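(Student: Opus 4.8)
The plan is to verify separately that the function
\[F(m) := \int_{\G_m} f(x)\, d\lambda(x)\]
is continuous on $M$ and that it vanishes at infinity. The second property is essentially immediate: since $\supp f$ is compact and $\pi$ is continuous, $\pi(\supp f)$ is a compact subset of $M$, and for every $m \notin \pi(\supp f)$ the fiber $\G_m$ is disjoint from $\supp f$, so that $f|_{\G_m} \equiv 0$ and hence $F(m) = 0$. Thus $\supp F \subseteq \pi(\supp f)$ is contained in a compact set, and once continuity is established we will in fact have shown the stronger statement that $F \in C_c(M) \subseteq C_0(M)$. The real content of the proposition is therefore the continuity of $F$, and this is where local triviality enters in an essential way.

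Since continuity is a local condition, I would fix $m_0 \in M$ and work on a trivializing neighborhood. Choose an open neighborhood $U$ of $m_0$ as in the definition of a group bundle, so that local triviality furnishes a homeomorphism $\pi\inv(U) \cong U \times G$; the compatibility of this trivialization with the fiberwise isomorphisms $\phi_m$ (which is exactly what underlies the remark that each $\phi_m$ is a homeomorphism) allows us to take the trivializing map to be $(m,s) \mapsto \phi_m\inv(s)$, a homeomorphism from $U \times G$ onto $\pi\inv(U)$. Setting $\tilde f(m,s) := f(\phi_m\inv(s))$, we obtain a continuous function on $U \times G$, and by the definition of the fiber integral (namely, $\int_{\G_m} f\, d\lambda = \int_G f(\phi_m\inv(s))\, d\lambda(s)$) we have $F(m) = \int_G \tilde f(m,s)\, d\lambda(s)$ for all $m \in U$. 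In this way the problem is reduced to the continuity of a fiber integral over the honest product $U \times G$.

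The remaining step is a standard fiberwise-integration argument. First I would control the support in the $G$-direction: passing to a relatively compact open neighborhood $V$ of $m_0$ with $\overline{V} \subseteq U$, the image of the compact set $\supp f \cap \pi\inv(\overline{V})$ under the trivialization is compact, hence contained in $\overline{V} \times K$ for some compact $K \subseteq G$, so that $\tilde f(m, \cdot)$ is supported in $K$ for every $m \in \overline{V}$. Since $\lambda(K) < \infty$, it then suffices to show that $\tilde f(m, \cdot) \to \tilde f(m_0, \cdot)$ uniformly on $K$ as $m \to m_0$, which follows from the joint continuity of $\tilde f$ together with the compactness of $K$ (a tube-lemma argument produces, for each $\epsilon > 0$, a neighborhood $W$ of $m_0$ on which $|\tilde f(m,s) - \tilde f(m_0,s)| < \epsilon$ for all $s \in K$; note that $M$ need not be first countable, so I would phrase this via neighborhoods rather than sequences). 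Bounding $|F(m) - F(m_0)| \le \epsilon\, \lambda(K)$ for $m \in W \cap V$ then gives continuity at $m_0$. I expect the main obstacle to be precisely the reduction carried out in the previous paragraph: everything hinges on using local triviality to replace the fiber integral by an integral over a fixed group $G$ against a fixed Haar measure $\lambda$, since for a merely continuous field of groups the fibers could ``twist'' and the pushed-forward measures could vary discontinuously. Once the product structure is in place, the passage of the limit inside the integral is routine.
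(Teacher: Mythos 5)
Your proposal is correct and follows essentially the same route as the paper: localize via a trivializing neighborhood to reduce the fiber integral to an integral over the fixed group $G$, use compactness of $\supp f$ to confine the integrand to a fixed compact subset of $G$, and then pass the limit through the integral by a uniform-continuity (tube-lemma) estimate. If anything, your write-up is slightly more careful than the paper's --- you explicitly verify vanishing at infinity (indeed compact support) and you shrink to a relatively compact neighborhood $\overline{V} \subseteq U$ to guarantee compactness of the support in the $G$-direction --- but the core argument is identical.
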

\begin{proof}
We begin by observing that since Haar measure is, in particular, a Radon measure, and $\supp f$ is compact, we know that  $L_m:= \lambda(\G_m \cap \supp f)$ is finite for each $m \in M$.

Fix $m \in M$, and let $U$ be a neighborhood of $m$ such that $\G|_U \cong U \times G$.  Thanks to this isomorphism, we will write $f(n,g)$ rather than $f(x)$ for $x \in\G|_U$. 
Since $\supp f$ is compact, $\pi_2\left(\supp f \cap (U \times G)\right) \subseteq G$ is also compact; consequently, for any $\epsilon > 0$, we can find a smaller neighborhood $U_\epsilon \subseteq U$ of $m$ such that if $n \in U_\epsilon$, $|f(n, g) - f(m,g)| < \epsilon/L_m$ for all $g \in G$.  

  It follows that if $n \in U_\epsilon$, 
\[ \left| \int_{\G_n} f(n,g) d\lambda(g) - \int_{\G_m} f(m,g) d\lambda(g)\right| \leq \int_G |f(n, g) -f(m,g)| \, d\lambda(g) < \epsilon.\]
In other words,
\[m\mapsto \int_{\G_m} f(x) d\lambda(x)\in C_0(M)\] as claimed.
\end{proof}

\begin{defn} A \emph{2-cocycle} on $\G$ is a continuous circle-valued function $\omega: \G\2 \to \T$ such that whenever $(x,y),\,(y,z) \in \G\2$, the \emph{cocycle condition} holds:
\begin{equation}
\omega(xy, z) \omega(x,y) = \omega(x,yz) \omega(y,z).
\label{cocycle}
\end{equation}
\end{defn}

\begin{example}

\begin{enumerate}
\item For any group bundle $\G$, the function $\omega: \G\2 \to \T$ given by $\omega(x,y) = 1 \ \forall \ (x,y) \in \G\2$ is a 2-cocycle on $\G$, called the \emph{trivial 2-cocycle.}
\item As discussed in the Introduction, if $\pi: V\to M$ is a vector bundle and $\sigma: V\2 \to \R$ is a 2-form on $V$, then $\omega(v,w) := e^{2\pi i \sigma(v,w)}$ defines a 2-cocycle on $V$.
\end{enumerate}
\end{example}

A 2-cocycle on $\G$ allows us to define a twisted convolution multiplication on $\G$, which in turn will allow us to build the associated \emph{twisted $C^*$-algebra} $C^*(\G,\omega)$.  This is a particular case of the construction of a twisted groupoid $C^*$-algebra, as described in Chapter II of \cite{renault}.

 Given a 2-cocycle $\omega$ on $\G$ and $f, g \in C_c(\G)$, we define the twisted convolution product of $f$ and $g$ by 
\[f*g(x) := \int_{\G_{\pi(x)}} f(xy) g(y\inv) \omega(xy, y\inv) d\lambda(y).\]
  We also define an involution on $C_c(\G)$ that incorporates the 2-cocycle:
\[f^*(x) := \overline{f(x\inv) \omega(x, x\inv)}.\]
Renault checks in \cite{renault} Proposition II.1.1
that the multiplication is well defined (that is, that $f* g \in C_c(\G)$ as claimed) and associative, and that $(f^*)^* = f$ so that the involution is involutive.  (The proof of associativity relies on the cocycle condition \eqref{cocycle}.) 

Thus, we have a $*$-algebra structure on $C_c(\G)$.  To indicate the importance of the 2-cocycle in this structure, we will often write $C_c(\G, \omega)$ for this $*$-algebra.  To avoid confusion between the use of $*$ to indicate the multiplication and the involution, we will usually denote the multiplication in $C_c(\G, \omega)$ by juxtaposition: $fg := f* g$.

 The twisted $C^*$-algebra $C^*(\G, \omega)$ is the completion of $C_c(\G,\omega)$ with respect to the \emph{full} or \emph{universal} $C^*$-norm \eqref{full-norm}. In order to give the precise definition of the universal norm,  we require some preliminary definitions.

\begin{defn} Let $\H$ be a Hilbert space.  We say that a $*$-homomorphism $\pi: C_c(\G, \omega) \to B(\H)$ is a \emph{representation} of $C_c(\G, \omega)$ if it is nondegenerate in the sense that 
\[\overline{\text{span}}\{\pi(f)\xi: f \in C_c(\G, \omega), \xi \in \H \} = \H.\]
\end{defn}

\begin{defn} The \emph{$I$-norm} on $C_c(\G)$ is given by 
\[\| f\|_I =  \max\left\{ \sup_{m \in M} \int_{\G_m} | f(x)| \, d\lambda(x), \ \sup_{m \in M} \int_{\G_m} | f(x\inv)| \, d\lambda(x) \right\}.\]
We say that a representation $\phi$ of $C_c(\G)$ is \emph{$I$-norm-bounded} if $\| \phi(f) \| \leq \| f\|_I$ for any $f \in C_c(G)$.
\label{i-norm}
\end{defn}
Then, 
 Proposition II.1.11 of \cite{renault} combines with the comments following Definition II.1.5 in \cite{renault}
to tell us
 that 
\begin{equation}
\|f\| := \sup \{\|\phi(f)\|: \phi \text{ is an $I$-norm-bounded representation of } C_c(\G, \omega) \}
\label{full-norm}
\end{equation}
is a $C^*$-norm.
\begin{defn}[cf. \cite{renault} Definition II.1.12]
The \emph{(universal) twisted $C^*$-algebra of $\G$}, denoted $C^*(\G, \omega)$, is the completion of $C_c(\G, \omega)$ with respect to the norm \eqref{full-norm}.
\end{defn}

The goal of this section is Proposition \ref{c0-m}, in which we  prove that, despite this intricate definition of the norm on $C^*(\mathcal{G},\omega)$, this $C^*$-algebra admits a $C_0(M)$-algebra structure which makes it much more tractable.

\begin{defn}
Let $A$ be a $C^*$-algebra and $M$ a locally compact Hausdorff space.  We say that $A$ is a \emph{$C_0(M)$-algebra} if there exists a $*$-homomorphism 
\[\Phi: C_0(M) \to ZM(A).\]
Given $f \in C_0(M), a \in A$, we will usually write $f \cdot a$ for $\Phi(f) a$.
\end{defn}

A $C_0(M)$-algebra fibers over $M$ in a natural way.  If $m \in M$, let 
\[I_m := \overline{C_0(M\backslash \{m\}) \cdot A};\]
then $I_m$ is an ideal, and the quotient $A_m := A/I_m$ gives the fiber of $A$ at $m$.  Indeed, Theorem C.26 of \cite{xprod} tells us that there is a unique topology on the bundle $\mathcal{A} := \coprod_{m\in M} A_m$ such that $A = \Gamma_0(\mathcal{A})$ is the continuous sections of $\mathcal{A}$ that vanish at infinity.

In an analogous manner to the construction of $A_m$, for any $C_0(M)$-algebra $A$ and any closed subset $F \subseteq M$, we have a quotient $A_F$ of $A$:
\[A_F := A/I_F \text{ where } I_F := \overline{C_0(M \backslash F) \cdot A}.\]
Proposition \ref{c0-m} below describes the $C_0(M)$-algebra structure carried by the twisted $C^*$-algebra of a group bundle $\pi: \G \to M$.  A similar result is obtained by Goehle for crossed products by a group bundle in Proposition 1.2 and Lemma 1.4 of \cite{goehle-mmII}, and the proof of Proposition \ref{c0-m} below proceeds along similar lines to Goehle's proof, and also to the proof of Theorem 3.5 from \cite{eag-kgraph}.

\begin{prop}
Let $\pi: \G \to M$ be a group bundle and let $\omega$ be a 2-cocycle on  $\G$.  Then $C^*(\G, \omega)$ is a $C_0(M)$-algebra, with $C^*(\G , \omega)_F \cong C^*(\G|_F, \omega)$ for any closed $F \subseteq M$.
\label{c0-m}
\end{prop}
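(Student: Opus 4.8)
The plan is to exhibit the $C_0(M)$-action explicitly on the dense subalgebra $C_c(\G,\omega)$ and then push it through to the completion. For $\psi \in C_0(M)$ and $f \in C_c(\G)$, I would set
\[(\Phi(\psi)f)(x) := \psi(\pi(x))\, f(x),\]
noting that $\Phi(\psi)f \in C_c(\G)$ since its support lies in $\supp f$. The essential observation is that $\pi$ is constant on each fiber: if $y \in \G_{\pi(x)}$ then $\pi(xy) = \pi(y\inv) = \pi(x)$. Carrying this through the definitions of the twisted convolution and involution shows that $\Phi(\psi)(fg) = (\Phi(\psi)f)g = f(\Phi(\psi)g)$ and $(\Phi(\psi)f)^* = \Phi(\bar\psi)f^*$; hence $\Phi(\psi)$ is a central double centralizer of $C_c(\G,\omega)$ and $\psi \mapsto \Phi(\psi)$ is a $*$-homomorphism. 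Because the range and source maps both equal $\pi$ on a group bundle, the left and right actions coincide, which is exactly what forces the image to land in the \emph{center} of the multiplier algebra.

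The next step is to show that $\Phi(\psi)$ extends to a multiplier of $C^*(\G,\omega)$, that is, that $\|\Phi(\psi)f\| \le \|\psi\|_\infty \|f\|$ in the universal norm. First, $\|\Phi(\psi)f\|_I \le \|\psi\|_\infty\|f\|_I$ is immediate from Definition \ref{i-norm}, so $\Phi(\psi)$ at least makes sense on $I$-norm-bounded representations. To upgrade this to the universal norm I would, for a fixed $I$-norm-bounded representation $L$ on $\H$, define an operator $T_\psi$ on the dense subspace $\mathrm{span}\{L(f)\xi\}$ by $T_\psi L(f)\xi = L(\Phi(\psi)f)\xi$; using centrality one computes $(\Phi(\psi)f)^*(\Phi(\psi)g) = |\psi|^2\cdot(f^*g)$, and since $\|\psi\|_\infty^2 - |\psi|^2$ is a nonnegative function acting as a positive central multiplier, a standard positivity estimate on matrices over $C_c(\G,\omega)$ gives $\|T_\psi\| \le \|\psi\|_\infty$. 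Taking the supremum over all such $L$ yields the desired bound, so $\Phi$ extends to a $*$-homomorphism $C_0(M) \to ZM(C^*(\G,\omega))$.

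It remains to identify the fibers. Fix a closed set $F \subseteq M$ and write $\G|_F = \pi\inv(F)$, a closed group subbundle over $F$ with the same fiber $G$ and the same Haar measure $\lambda$. I would introduce the restriction map $\rho_F: C_c(\G,\omega) \to C_c(\G|_F,\omega)$, $\rho_F(f) = f|_{\G|_F}$. Since every convolution integral at a point $x \in \G|_F$ only sees the fiber $\G_{\pi(x)} \subseteq \G|_F$, the map $\rho_F$ is a $*$-homomorphism; it is surjective by a Tietze-type extension of compactly supported functions off the closed set $\G|_F$, and it satisfies $\|\rho_F(f)\|_I \le \|f\|_I$, so it extends to a surjection $\rho_F: C^*(\G,\omega) \to C^*(\G|_F,\omega)$. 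The inclusion $I_F \subseteq \ker\rho_F$ is clear, because any $\psi \in C_0(M\setminus F)$ vanishes on $F$; thus $\rho_F$ descends to a surjection $\bar\rho_F: C^*(\G,\omega)_F \to C^*(\G|_F,\omega)$, and the whole proposition reduces to proving that $\bar\rho_F$ is injective.

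Injectivity of $\bar\rho_F$ is where I expect the real work to lie. I would prove it by matching the universal norms through a bijection of representations: a nondegenerate, $I$-norm-bounded representation of $C^*(\G|_F,\omega)$ pulls back along $\rho_F$ to one of $C^*(\G,\omega)$ that annihilates $I_F$, and conversely every representation $L$ of $C^*(\G,\omega)$ vanishing on $I_F$ must factor through $\rho_F$. The crux is this converse, which amounts to showing that $L(g) = 0$ whenever $g \in C_c(\G)$ vanishes on $\G|_F$. The key technical step is a Urysohn approximation: given such a $g$ and $\epsilon > 0$, the set where $|g| \ge \epsilon$ is compact and $\pi$ maps it into a compact subset of $M \setminus F$, so there is $\psi \in C_c(M\setminus F)$ with $0 \le \psi \le 1$ and $\Phi(\psi)g$ within $\epsilon$ of $g$ in $I$-norm (uniform control suffices because all supports sit inside the fixed compact $\supp g$ and the fiber measures are uniformly bounded, by Proposition \ref{cts-int}); since $L(\Phi(\psi)g) = 0$ and $L$ is $I$-norm-bounded, letting $\epsilon \to 0$ forces $L(g) = 0$. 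Keeping track of the $I$-norm of the Tietze extensions, so that the norm correspondence is isometric rather than merely contractive --- the bookkeeping where the local triviality of $\pi$ is genuinely used --- is the main obstacle, and once it is in place the equality $\ker\rho_F = I_F$, and hence $C^*(\G,\omega)_F \cong C^*(\G|_F,\omega)$, follows.
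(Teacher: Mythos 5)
Your proposal follows the paper's proof almost step for step: the same formula for the $C_0(M)$-action; the same positivity trick for the bound $\|\phi\cdot f\|\leq\|\phi\|_\infty\|f\|$ (the paper multiplies $\|\phi\|_\infty^2-|\phi|^2$ by a cutoff $|\phi_f|^2$ so as to stay inside $C_0(M)$, but your centralizer/matrix variant is the same idea); the same restriction map (your $\rho_F$ is the paper's $q_F$), extended to the completions by composing with $I$-norm-bounded representations and shown surjective by Tietze; and the same factoring of representations that kill $I_F$. Your Urysohn approximation showing that $g\in C_c(\G)$ vanishing on $\G|_F$ lies in $I_F$ is exactly the paper's Lemma \ref{well-def}, proved there with an approximate unit $\{f_{K,U}\}$ but resting on the identical two ingredients you cite: continuity of $g$ and the uniform bound on $m\mapsto\lambda(\G_m\cap\supp g)$ coming from Proposition \ref{cts-int}.

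However, the step you explicitly defer --- making ``the norm correspondence \dots isometric rather than merely contractive'' --- is a genuine gap, and it is precisely where the paper's proof does its remaining work. The point is that for $L'$ (defined by $L'(q_F(f)):=L(f)$) to extend to the universal completion $C^*(\G|_F,\omega)$, you need $\|L'(h)\|\leq\|h\|_I$ with the $I$-norm computed over $F$ alone; knowing only $\|L(f)\|\leq\|f\|_I$ for extensions $f$ is weaker, since an extension can have much larger $I$-norm than its restriction. The paper's resolution is not to construct good Tietze extensions from scratch but to improve an arbitrary one: Proposition \ref{cts-int} implies that $m\mapsto\|q_{\{m\}}(f)\|_I$ is continuous, so $W_\epsilon=\{m\in M:\|q_{\{m\}}(f)\|_I<\|q_F(f)\|_I+\epsilon\}$ is an open neighborhood of $F$; choosing $\psi_{f,\epsilon}\in C_0(M)$ with $0\leq\psi_{f,\epsilon}\leq 1$, equal to $1$ on $F$ and to $0$ off $W_\epsilon$, the function $f-\psi_{f,\epsilon}\cdot f$ vanishes on $\G|_F$, so your own Urysohn lemma gives $L(f)=L(\psi_{f,\epsilon}\cdot f)$, while $\|\psi_{f,\epsilon}\cdot f\|_I\leq\sup_{m\in W_\epsilon}\|q_{\{m\}}(f)\|_I<\|q_F(f)\|_I+\epsilon$. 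Letting $\epsilon\to 0$ yields $\|L'(q_F(f))\|\leq\|q_F(f)\|_I$, which is exactly the missing isometric control. The ingredients (Proposition \ref{cts-int} plus Urysohn, i.e.\ the same tools you already deploy) suffice to fill the hole, but as written your argument stops short at its acknowledged crux, so the identification $\ker q_F=I_F$ is not yet proved.
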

\begin{proof}
The action of $C_0(M)$ on $C^*(\G, \omega)$ is defined as one might expect: for $\phi \in C_0(M), \ f \in C_c(\G, \omega)$, define $\phi \cdot f \in C_c(\G, \omega)$ by 
\begin{equation}
\phi\cdot f(x)  = \phi(\pi(x)) f(x).
\label{Phi}
\end{equation}
It's immediate that the action is linear and multiplicative; because $\pi(x) = \pi(y)$ whenever $x,y$ are in the same fiber of the group bundle, and in particular we have $\pi(x) = \pi(x\inv)$, we also have 
\[f* (\phi \cdot g) = (\phi \cdot f) *g = \phi \cdot (f*g)\]
 for any $f,g \in C_c(\G, \omega)$ and any $\phi \in C_0(M)$.  In other words, $C_0(M)$ acts centrally on $C_c(\G, \omega)$.  Moreover, a straightforward check shows that $(\phi\cdot f)^* = \phi^*\cdot f^*,$ so the centrality of the action implies that it  is also $*$-preserving. 
 
Thus, to see that this action gives rise to a $*$-homomorphism $\Phi: C_0(M) \to ZM(C^*(\G, \omega))$, we merely need to check that the action is bounded.  That is, we will show that $\|\phi\cdot f\| \leq \|\phi\|_\infty \|f\|$ for any $f \in C_c(\G,\omega)$ and any $\phi \in C_0(M)$.  

Fix  $\phi \in C_0(M), f \in C_c(\G, \omega)$.  Letting $K_f = \text{supp } f$,  choose $\phi_f \in C_0(M)$ to be 1 on $\pi(K_f)$.  Then the function 
\[\xi(m) :=  (\|\phi\|_\infty^2 - |\phi(m)|^2) |\phi_f(m)|^2 \]
is in $C_0(M)$.   Moreover, $\xi$ is positive, and hence has a positive square root, $k$.  The positivity of $k$, combined with our earlier observations that the action is multiplicative, $*$-preserving, and central, means that 
\[ (\xi\cdot f)^*f = k^2 \cdot (f^*f) = (k \cdot f)^* (k \cdot f).\]
Therefore, $(\xi\cdot f)^*f \geq 0$ in $C^*(\G, \omega)$.  Since $\xi^* = \xi$, this inequality tells us that
\[ 0 \leq (\xi\cdot f)^*f = \xi \cdot( f^*f) = (\|\phi\|^2_\infty  |\phi_f|^2) \cdot (f^*f) - (|\phi \cdot \phi_f|^2)\cdot  (f^*f).\]
Since positivity preserves norms, it follows that 
\begin{equation}
 \| (|\phi \cdot \phi_f|^2)\cdot (f^*f)\| \leq \| (\|\phi\|_\infty^2 |\phi_f|^2) \cdot (f^*f)\|.
 \label{norm-compare}
 \end{equation}
 
 Observe that, since 
 $\phi_f = 1 $  on $\text{supp}\, f$,   the function $\|\phi\|_\infty^2 |\phi_f|^2$ acts on $f$ as multiplication by the constant $\|\phi\|_\infty^2$.  Moreover, 
\[ (|\phi  \cdot \phi_f|^2)\cdot f^*f = ((\phi\cdot  \phi_f) \cdot f )^*  \left((\phi \cdot \phi_f)\cdot f \right)= (\phi \cdot f)^* (\phi \cdot f)\]
because the action is multiplicative, central, and $*$-preserving.
Thus, Equation \eqref{norm-compare} becomes 
\[\|(\phi \cdot f)^* (\phi \cdot f)\| \leq \|\phi\|_\infty^2 \|f^*f\|,\]
so by the $C^*$-identity we have 
\[ \| \phi \cdot f \| \leq \|\phi\|_\infty \|f\|\]
as claimed.

 We have thus shown that the action is bounded, so it  extends to a $*$-homomorphism $\Phi: C_0(M) \to M(C^*(\G, \omega))$.  In fact, $\text{Im} \, \Phi \subseteq ZM(C^*(\G, \omega))$ because $\text{Im}\, \Phi$ acts centrally on the dense $*$-subalgebra $C_c(\G, \omega)$.
Moreover, $\Phi(\phi_f )f = f$ for any $f \in C_c(\G, \omega)$, so $\Phi(C_0(M))\cdot C^*(\G, \omega)$ contains the dense subalgebra $C_c(\G, \omega)$.  Consequently, $\overline{\Phi(C_0(M)) \cdot C^*(\G, \omega)} = C^*(\G, \omega)$. 
In other words, $\Phi$ makes $C^*(\G,\omega)$ into a $C_0(M)$-algebra as claimed.
We will use this action of $C_0(M)$ on $C^*(\G ,\omega)$ throughout the rest of this proof, usually denoting it by $\phi \cdot f$ as above rather than by $\Phi$.

Checking that $C^*(\G,\omega)_F \cong C^*(\G|_F , \omega)$ for any closed subset $F \subseteq M$ as claimed will require rather more work.

Recall that $C^*(\G,\omega)_F$ is given by the quotient $C^*(\G, \omega)/I_F$, where 
\[I_F:= \overline{\textrm{span}}\{\phi \cdot f: \phi \in C_0(M \backslash F), f \in C^*(\G,\omega)\}.\]
Thus, in order to prove the Proposition, we must show that 
\[C^*(\G|_F, \omega) \cong C^*(\G,\omega)/I_F.\]  We will begin by showing
 that we can indeed exhibit $C^*(\G|_F, \omega)$ as a quotient of $C^*(\G,\omega)$ whenever $F \subseteq M$ is closed.

Fix a closed subset $F \subseteq M$ and let $q_F: C_c(\G, \omega) \to C_c(\G|_{F}, \omega)$ be the restriction map.  By the definition of the $I$-norm given in Definition \ref{i-norm}, $q_F$ is $I$-norm-bounded; since the operations in the $*$-algebra $C_c(\G, \omega)$ respect the way $\G$ fibers over $M$, $q_F$ is also a $*$-homomorphism.  Consequently, for any $I$-norm-bounded representation $\psi$ of $C_c(\G|_{F}, \omega)$, the composition $\psi\circ q_F$ is an $I$-norm-bounded representation of $C_c(\G, \omega)$.  Thus, for any $f \in C_c(\G, \omega)$, 
\begin{align*}
\|q_F(f)\| &= \sup \{ \|\psi\circ q_F(f)\|:\\
& \qquad  \psi\text{ an $I$-norm-bounded representation of }C_c(\G_F, \omega)\} \\
&\leq \sup \{ \|\Psi(f)\|: \Psi\text{ an $I$-norm-bounded representation of }C_c(\G, \omega) \} \\
&= \| f\|.
\end{align*}
Hence, $q_F$  extends to a $*$-homomorphism, also denoted $q_F$, from $C^*(\G, \omega)$ to $C^*(\G|_F ,\omega)$. 

Note that any function $f \in C_c(\G|_F )$ can be extended to $\overline{f} \in C_c(\G)$ by the Tietze Extension Theorem, 
so  that $q_F(\overline{f}) = f$.  Since $C_c(\G|_F ) \subseteq C^*(\G|_F ,\omega)$ is dense, this implies that $q_F:  C^*(\G ,\omega) \to C^*(\G|_F ,\omega)$ is surjective.  In other words,
\[ C^*(\G|_F ,\omega)\cong C^*(\G ,\omega)/\ker q_F.\]

Thus, to see that $C^*(\G|_F ,\omega) =C^*(\G,\omega)_F$, it suffices to show that $\ker q_F = I_F $.  A standard approximation argument will show that $\ker q_F \supseteq I_F$: 
the tricky part is showing that $\ker q_F \subseteq I_F$.  



%
%

To show that $\ker q_F \subseteq I_F$, we will show that  any  representation $L$ of $C_c(\G ,\omega)$ such that  $L(I_F) = 0$ must factor through $q_F$, so that we can write $L = L'\circ q_F$ for some $I$-norm-bounded representation $L'$ of $C_C(\G|_F ,\omega)$.  This will imply that $\ker q_F \subseteq \ker L$ for all such representations $L$, and consequently that $\ker q_F \subseteq I_F$ as desired.



Given an $I$-norm-bounded representation $L: C_c(\G ,\omega)\to B(\mathcal{H})$ such that $L(I_F)= 0$, define $L': C_c(\G|_F ,\omega) \to B(\mathcal{H})$ by 
\[L'(q_F(f)) := L(f).\]
We would like to show that $L'$ is a representation of $C_c(\G|_F ,\omega)$. Note that $L'$ preserves the $*$-algebra structure on $C_c(\G|_F ,\omega)$ because $L$ and $q_F$ do so, being $*$-homomorphisms.  Moreover, $L'$ is nondegenerate because $L$ is and because $q_F: C_c(\G,\omega) \to C_c(\G|_F ,\omega)$ is surjective.  Thus,  we only need to check that $L'$ is well-defined and bounded. 

To see that $L'$ is well defined, we need to show that $L(f) = L(g)$ whenever $q_F(f) = q_F(g)$. 

\begin{lemma}
 If $f, g \in C_c(\G, \omega)$ satisfy $q_F(f) = q_F(g)$, then the function 
 \[h = f -g \in C_c(\G  ,\omega)\]
  lies in $I_F$.  Consequently, $L(f) = L(g)$ and $L'$ is well defined on $C_c(\G|_F,\omega)$.
 \label{well-def}
\end{lemma}
\begin{proof}
Let $\{f_{K,U}\}_{K,U}$ be an approximate unit for $C_0(M\backslash F),$ indexed by pairs $(K, U)$ where $K \subseteq M$ is compact and $U \supseteq F$ is open, such that $U \cup K = M; \ f_{K,U}$ is 1 on $K \backslash U$ and 0 on $F$; and $0 \leq f_{K,U}(m) \leq 1$ for all $m \in M$.  (We can always construct such functions by using Urysohn's Lemma.)

  We will show that the $I$-norm $\| h - f_{K,U} \cdot h\|_I \to 0$, where we take the limit over increasing $K$ and decreasing $U$.   
  Since the norm in $C^*(\G,\omega)$ is dominated by the $I$-norm, it will follow that 
  \[h = \lim_{K,U} f_{K,U}\cdot  h \]
  in $C^*(\G ,\omega)$, and consequently $h\in I_F$.

We first observe that the function $m \mapsto \lambda(\G_m \cap \supp h)$ is bounded, where $\lambda$ denotes our chosen  Haar measure on the fiber group $\G_m \cong G$ (recall that $\G_m \cong \G_n \cong G$ for all $m, n \in M$).  To see that this function is bounded, let $W$ be an open neighborhood of $\supp h$, and use Urysohn's Lemma to construct  $k_W \in C_c(\G )$ such that $k_W|_{\supp h} = 1$ and $\supp k_W \subseteq W$.  Then for any $m \in M$,
  \[\int_{\G_m} k_W(x) d\lambda(x) \geq \lambda(\G_m \cap \supp h).\]
  Moreover, we know from Proposition \ref{cts-int} that 
\[m\mapsto \int_{\G_m} k_W(x) d\lambda(x)\in C_0(M),\]
because $k_W \in C_c(\G)$.
Since this function is an upper bound for the function $m \mapsto \lambda(\G_m \cap \supp h)$, it follows that $m \mapsto \lambda(\G_m \cap \supp h)$ is bounded on $M$, as claimed.
   Let $\ell$ be the maximum value of the function $ m \mapsto \lambda(\G_m \cap \supp h)$.

Let $\epsilon > 0$ be given, and let $U = \pi(h\inv(B_{\epsilon/2\ell}(0)))$.  Then $U \subseteq M$ is open and contains $F$.  Let $K = \pi(\supp h) \subseteq M$; we will show that for any $(K', U') \geq (K,U)$ we have $\| h - f_{K',U'}\cdot h\|_I < \epsilon$.

Recall that  
\begin{align*}
 \| h - f_{K',U'}\cdot h\|_I &= \max \left\{ \sup_{m \in M} \int_{\G_m} | h(x) - f_{K',U'}(m) h(x)| \, d\lambda(x), \right. \\
 & \qquad \left. \sup_{m \in M} \int_{\G_m} | h(x\inv) - f_{K',U'}(m) h(x\inv)| \, d\lambda(x)\right\} .
 \end{align*}
 If $ m \in  K' \backslash U'$, then $f_{K',U'}(m) = 1$ and the above integrals are zero.  If $m \in  U'$, then since $(K', U') \geq (K,U)$ we also have $m \in U$, so 
 \[|h(x)|< \epsilon/2\ell \ \forall \ x \in \G_m.\]
   Moreover, the fact that $0 \leq f_{K',U'}(m) \leq 1 \ \forall \ m \in M$ implies that for any $x \in \G$,
\[|h(x) - f_{K',U'}(\pi(x)) h(x)| \leq 2|h(x)|.\]
It follows that
 \begin{align*}
 \| h - f_{K',U'} \cdot h\|_I &\leq \max \left\{\sup_{m \in U'} \int_{\G_m} | h(x) - f_{K',U'}(m) h(x)| \, d\lambda(x), \right. \\
 & \qquad \left. \sup_{m \in U'} \int_{\G_m} | h(x\inv) - f_{K',U'}(m) h(x\inv)| \, d\lambda(x)\right\} \\
 &\leq \max \left\{ \sup_{m \in U'} \int_{\G_m} 2|h(x)| \, d\lambda(x),\ \sup_{m\in U'} \int_{\G_m} 2|h(x\inv)| \, d\lambda(x) \right\} \\
 & \leq 2 \sup_{m \in U'}\sup_{x \in \G_m} |h(x)| \lambda (\G_m \cap \supp h) \\
 & < 
 \epsilon.
 \end{align*}
Consequently, $\lim_{K,U}\| h - f_{K,U}\cdot h \|_I =0$.  It follows that $h \in I_F$ as claimed, and so $L(h) = 0$.  This proves that $L'$ is well defined.
\end{proof}

Having seen that $L'$ is well defined, we proceed to show that $\| L'(f)\| \leq \|f\|_I$ for any $f \in C_c(\G|_F, \omega)$.  First, we note that Proposition \ref{cts-int} and the definition of the $I$-norm imply that the function $m \mapsto \|q_{\{m\}}(f)\|_I$ is continuous for each $f \in C_c(\G)$.  Consequently, if we fix $f \in C_c(\G,\omega), \ \epsilon > 0$, the set 
\[W_\epsilon = \{m \in M: \| q_{\{m\}}(f)\|_I < \|q_F(f)\|_I + \epsilon\}\]
is open;
 note that $F \subseteq W_\epsilon$. Thus, we can choose $\psi_{f,\epsilon} \in C_0(M)$ such that $0 \leq \psi_{f,\epsilon}(m) \leq 1 \ \forall \ m \in M$; $\psi_{f,\epsilon} = 1$ on $F $; and $\psi_{f,\epsilon} = 0$ off $W_\epsilon$.  Since $\psi_{f,\epsilon} = 1$ on $F$, we have 
\[L(f) = L'(q_F(f)) = L'(q_F(\psi_{f,\epsilon} \cdot f)) = L(\psi_{f,\epsilon} \cdot f).\]
Consequently,
\begin{align*}
\|L'(q_F(f))\| &= \|L(\psi_{f,\epsilon} \cdot f)\| \leq \|\psi_{f,\epsilon} \cdot f\|_I \\
&= \max \left\{\sup_{m\in M} \int_{\G_m} |\psi_{f, \epsilon}(m) f(x)| \, d\lambda(x) ,  \sup_{m\in M} \int_{\G_m} |\psi_{f,\epsilon}(m) f(x\inv)| \, d\lambda(x)  \right\} \\
&\leq \max \left\{ \sup_{m \in W_\epsilon} \int_{\G_m} |f(x)| \, d\lambda(x) , \sup_{m\in W_\epsilon} \int_{\G_m} |f(x\inv)| \, d\lambda(x) \right\}\\
&= \sup_{m \in W_\epsilon} \| q_{\{m\}}(f)\|_I\\
& < \| q_F(f)\|_I + \epsilon.
\end{align*}
Since we can choose such a $\psi_{f,\epsilon}$ for any $\epsilon>0$, it follows that 
\[\|L'(q_F(f))\| \leq \|q_F(f)\|_I.\]
The fact that $q_F: C_c(\G, \omega) \to C_c(\G|_F, \omega)$ is onto now implies that $L'$ is an $I$-norm-bounded representation of $C_c(\G|_F, \omega)$.

 In other words, every representation of $C_c(\G, \omega)$ which kills $I_F$ also factors through $q_F$, so $\ker q_F = I_F$ as claimed.  That is, 
 \[C^*(\G|_F, \omega) \cong C^*(\G, \omega)_F\]
 for any $F \subseteq M$ closed.
This finishes the proof of   Proposition \ref{c0-m}. \end{proof}

Knowing that  $C^*(\G, \omega)_F= C^*(\G|_F ,\omega)$ will be crucial for the arguments in the next section.  
However, we will also need  a  result (Proposition \ref{mv-prop}) about the way  ideals in $C_0(M)$-algebras relate.  Although this result is undoubtedly well-known to experts, we include a proof for completeness.

We begin with an observation about approximate units in $C_0(M)$.  Since $M$ is locally compact Hausdorff, for any closed set $F\subseteq M$ we can write $M \backslash F$ as an increasing union $M\backslash F = \cup_{i\in I} K_i$ of compact sets, and then Urysohn's Lemma tells us that we can find an approximate unit $\{\phi^F_i\}_{i \in I}$ for $C_0(M \backslash F)$ such that $\phi^F_i$ is 1 on $K_i$.  It follows that for any $m \in M\backslash F$, there exists $J \in I$ such that $i \geq J$ implies $\phi^F_i(m) =1$.

\begin{lemma}
Let $A$ be a $C_0(M)$-algebra for a second countable locally compact Hausdorff space $M$, and let $F_1, F_2 \subseteq M$ be closed.  For any $a \in I_{F_1 \cap F_2}$, we can find $g \in I_{F_1}, h \in I_{F_2}$ such that $a-g-h \in I_{F_1 \cup F_2}$.
\label{tech}
\end{lemma}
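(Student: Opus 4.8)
The plan is to reduce the statement to the quotient $B := A/I_{F_1\cup F_2}$, where the three ideals become easy to compare. First I would record the elementary containments $I_{F_1 \cup F_2} \subseteq I_{F_1}, I_{F_2} \subseteq I_{F_1 \cap F_2}$, which follow from $C_0(M\backslash(F_1\cup F_2)) \subseteq C_0(M\backslash F_j) \subseteq C_0(M\backslash(F_1\cap F_2))$ since $M\backslash(F_1\cup F_2) \subseteq M \backslash F_j \subseteq M\backslash(F_1 \cap F_2)$. In particular the conclusion ``$a - g - h \in I_{F_1\cup F_2}$ for some $g \in I_{F_1}, h\in I_{F_2}$'' is equivalent to the assertion that the image $\bar a$ of $a$ in $B$ lies in $\bar I_{F_1} + \bar I_{F_2}$, where $\bar I_{F_j}$ denotes the image of $I_{F_j}$ in $B$.

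Next I would identify these images concretely. Since $B$ is a $C_0(F_1\cup F_2)$-algebra and the quotient map $A \to B$ carries the $C_0(M)$-action to the $C_0(F_1\cup F_2)$-action by restriction of functions, a Tietze-extension argument identifies $\bar I_{F_j}$ with the ideal $\overline{C_0(V_j)\cdot B}$ of $B$ attached to the open set $V_j := (F_1 \cup F_2)\backslash F_j$; similarly $\bar I_{F_1 \cap F_2} = \overline{C_0(V)\cdot B}$ for $V := (F_1\cup F_2)\backslash(F_1 \cap F_2) = V_1 \cup V_2$. The decisive observation is that $V_1 = F_2 \backslash F_1$ and $V_2 = F_1\backslash F_2$ are disjoint and, because $V_2 = V \cap F_1$ and $V_1 = V \cap F_2$ are each closed in $V$, they are in fact both \emph{clopen} in $V$. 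Hence the indicator function $\chi_{V_1}$ is a bounded continuous function on $V$, i.e. a central projection $p$ in $M(C_0(V))$.

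Finally I would use $p$ to split $\bar a$. Since $\bar I_{F_1 \cap F_2} = \overline{C_0(V)\cdot B}$ is a nondegenerate $C_0(V)$-algebra, the action of $C_0(V)$ extends to $C_b(V) = M(C_0(V))$, so $p$ acts as a central projection multiplier of $\bar I_{F_1\cap F_2}$. Because $\chi_{V_1}\,C_0(V) = C_0(V_1)$ and $(1-\chi_{V_1})\,C_0(V) = C_0(V_2)$, I get $g' := p\,\bar a \in \overline{C_0(V_1) B} = \bar I_{F_1}$ and $h' := (1-p)\,\bar a \in \overline{C_0(V_2) B} = \bar I_{F_2}$, with $g' + h' = \bar a$. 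Lifting $g', h'$ to $g \in I_{F_1}, h \in I_{F_2}$ then yields $a - g - h \in \ker(A \to B) = I_{F_1 \cup F_2}$, as required.

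I expect the main obstacle to be convergence, not the splitting itself. If one works directly in $A$ --- decomposing each approximating element $\sum_k \phi_k b_k \in C_0(M\backslash(F_1\cap F_2))\cdot A$ via a partition of unity $\phi_k = \rho_1\phi_k + \rho_2\phi_k$ subordinate to $\{M\backslash F_1, M\backslash F_2\}$ --- one obtains an \emph{exact} decomposition of each approximant, but the two halves need not converge separately, since $\phi b \mapsto \rho_1 \phi b$ is not a well-defined bounded map on $A$. Passing to $B$ removes precisely this difficulty: there the cutoff $\rho_1$ becomes the genuine projection $\chi_{V_1}$, so $\bar a \mapsto p\,\bar a$ is a norm-decreasing, honestly defined map, and no limiting argument on the pieces is needed.
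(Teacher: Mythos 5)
Your proposal is correct, but it takes a genuinely different route from the paper's. The paper works entirely inside $A$ with approximate units: it picks Urysohn-type approximate units for $C_0(M\backslash(F_1\cap F_2))$, $C_0(M\backslash F_1)$, $C_0(M\backslash F_2)$, forms approximate splittings $g_\epsilon = (\lim_\lambda \phi^1_\lambda\phi^{12}_J)\cdot a$ and $h_\epsilon = (\lim_\mu \phi^2_\mu\phi^{12}_J)\cdot a$, verifies pointwise on $F_1\cup F_2$ that $\phi^{12}_J\cdot a - g_\epsilon - h_\epsilon \in I_{F_1\cup F_2}$, and then runs a Cauchy estimate to show that the nets $\{g_\epsilon\}$ and $\{h_\epsilon\}$ converge to the desired $g$ and $h$ --- precisely the convergence obstacle you diagnose in your final paragraph, which the paper handles head-on with $\epsilon$-management rather than sidestepping. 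You instead pass to the quotient $B = A_{F_1\cup F_2}$, identify the images of the three ideals as $\overline{C_0(W)\cdot B}$ for the relatively open sets $W = F_2\backslash F_1$, $F_1\backslash F_2$, and $(F_1\cup F_2)\backslash(F_1\cap F_2)$, and exploit the fact that the first two are complementary clopen pieces of the third, so the indicator function acts as a multiplier projection that splits $\bar a$ exactly; the key supporting points all check out (since $F_j \subseteq F_1\cup F_2$, a Tietze extension of a function vanishing on $F_j$ automatically vanishes on all of $F_j$; images of closed ideals under surjective $*$-homomorphisms are closed, so lifting $g'$, $h'$ back to $I_{F_1}$, $I_{F_2}$ is legitimate; and the $C_0(V)$-action on $\overline{C_0(V)\cdot B}$ is nondegenerate, so it does extend to $C_b(V)$). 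What your approach buys is structural clarity and the complete elimination of limits: the lemma is revealed as saying that, modulo $I_{F_1\cup F_2}$, the ideals $I_{F_1}$ and $I_{F_2}$ become complementary direct summands of $I_{F_1\cap F_2}$ cut out by a central projection. What the paper's approach buys is self-containedness: it uses nothing beyond Urysohn functions and norm estimates, whereas yours invokes the multiplier-algebra extension of nondegenerate actions and the pushforward description of ideals in quotients. (Neither argument actually uses the second countability hypothesis.)
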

\begin{proof}
Let $\{\phi^{12}_i\}_{i \in I}$ denote the approximate unit for $C_0(M \backslash (F_1 \cap F_2))$ described above.  Then, given $\epsilon > 0$ and $a \in I_{F_1 \cap F_2}$, there exists $J $ such that  $\| a - \phi^{12}_J\cdot  a\| < \epsilon$.  Let $\{\phi^1_\lambda\}_{\lambda \in \Lambda},  \{\phi^2_\mu \}_{\mu \in S}$ be the analogous approximate units for $C_0(M \backslash F_1)$ and $C_0(M \backslash F_2)$, respectively.  Then 
\[g_\epsilon := (\lim_\lambda \phi^1_\lambda \phi^{12}_J) \cdot a \in I_{F_1}, \quad h_\epsilon := (\lim_\mu \phi^2_\mu\phi_J)\cdot a  \in I_{F_2}.\]
Moreover, $\phi^{12}_J \cdot a - g_\epsilon -h_\epsilon \in I_{F_1 \cup F_2}$.  To see this, fix $m \in F_1 \cup F_2$.  If $m \in F_1 \backslash F_2$, we can choose $L$ large enough that $\mu \geq L$ implies $\phi^2_\mu(m) = 1$; consequently, 
\begin{equation}
\phi^{12}_J(m) - \phi^1_\lambda(m)\phi^{12}_J(m) - \phi^2_\mu(m) \phi^{12}_J(m) = 0
\label{eqn2}
\end{equation}
since $\phi^1_\lambda(m) = 0$ for any $\lambda$ when $m \in F_1$.  Similarly, if $m \in F_2 \backslash F_1$, we can choose $K$ such that $\lambda \geq K$ implies $\phi^1_\lambda(m) = 1$, and so \eqref{eqn2} also holds; and if $m \in F_1 \cap F_2$ then $\phi^{12}_J(m) = 0$ and \eqref{eqn2} still holds.  Thus, if $\lambda, \mu$ are large enough then Equation \eqref{eqn2} holds for all $m \in F_1 \cup F_2$, so taking the limit over $\lambda, \mu$ reveals that 
\[\phi^{12}_J \cdot a - g_\epsilon - h_\epsilon = (\phi^{12}_J - \lim_\lambda \phi^1_\lambda \phi^{12}_J - \lim_\mu \phi^2_\mu \phi^{12}_J) \cdot  a \in I_{F_1 \cup F_2},\] as claimed.

Therefore, denoting the norm in the quotient $A_{F}$ by $\| \cdot \|_{F}$, we see that 
\begin{align*}
\| a - g_\epsilon - h_\epsilon\|_{F_1 \cup F_2} &\leq \| a - \phi^{12}_J \cdot a \|_{F_1 \cup F_2} + \|\phi^{12}_J \cdot a  - g_\epsilon - h_\epsilon\|_{F_1 \cup F_2} \\
&= \| a - \phi^{12}_J \cdot a \|_{F_1 \cup F_2} \\
& \leq \| a - \phi^{12}_J \cdot a\| \\
& < \epsilon.
\end{align*}

Furthermore, since $0 \leq \phi^1_\lambda(m) \leq 1$ for all $m \in M$,
\begin{align*}
\| g_\epsilon - g_{\epsilon'}\| &= \| \lim_\lambda  \phi^1_\lambda(\phi^{12}_J  - \phi^{12}_{J'}) \cdot a\|\\
&\leq \| (\phi^{12}_J- \phi^{12}_{J'}) \cdot a\|\\
&< \epsilon + \epsilon',
\end{align*}
so the net $\{g_\epsilon\}_{\epsilon>0}$ converges in $A$.  The same argument will show that $\{h_\epsilon\}_{\epsilon>0}$ also converges.
Setting $g := \lim_\epsilon g_\epsilon$ and $h := \lim_\epsilon h_\epsilon$, we have $g \in I_{F_1}$ since $g_\epsilon \in I_{F_1} \ \forall \ \epsilon$; similarly, $h \in I_{F_2}$.  We claim that $a-g-h \in I_{F_1 \cup F_2}$.

To see this, let $\delta >0$ be given and suppose $\epsilon$ is small enough that $\| g - g_\epsilon\|, \|h - h_\epsilon\| < \delta$.  Without loss of generality, suppose $\epsilon < \delta$. Then 
\begin{align*}\| a- g-h\|_{F_1 \cup F_2} & \leq \| a- g_\epsilon - h_\epsilon\|_{F_1 \cup F_2} + \|g_\epsilon - g\|_{F_1 \cup F_2} + \| h_\epsilon - h\|_{F_1 \cup F_2}\\
&< \epsilon + 2\delta \\
&< 3\delta.
\end{align*}
It follows that
$a - g - h \in I_{F_1 \cup F_2}$ as claimed.
\end{proof}

\begin{prop}
Let $A$ be a $C_0(X)$-algebra for a locally compact Hausdorff space $X$, and let $F_1, F_2 \subseteq M$ be closed.  Then we have a short exact sequence of $C^*$-algebras 
\begin{equation}
\label{mv}
0 \to A_{F_1 \cup F_2} \to A_{F_1} \oplus A_{F_2} \to A_{F_1 \cap F_2} \to 0.
\end{equation}
\label{mv-prop}
\end{prop}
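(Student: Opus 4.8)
The plan is to exhibit the two maps in \eqref{mv} explicitly and verify exactness at each of the three spots, invoking Lemma \ref{tech} only where it is genuinely needed. First I would record the ideal inclusions. Since $F_1 \cap F_2 \subseteq F_i \subseteq F_1 \cup F_2$, and since $F \subseteq F'$ forces $C_0(M \setminus F') \subseteq C_0(M \setminus F)$ and hence $I_{F'} \subseteq I_F$, we get $I_{F_1 \cup F_2} \subseteq I_{F_i} \subseteq I_{F_1 \cap F_2}$ for $i = 1,2$. These inclusions make the following maps well defined: let $\alpha \colon A_{F_1 \cup F_2} \to A_{F_1} \oplus A_{F_2}$ send the class of $a$ to $(a + I_{F_1},\, a + I_{F_2})$, and let $\beta \colon A_{F_1} \oplus A_{F_2} \to A_{F_1 \cap F_2}$ send $(a + I_{F_1},\, b + I_{F_2})$ to $(a - b) + I_{F_1 \cap F_2}$. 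Here $\alpha$ is a $*$-homomorphism and $\beta$ is a surjective, self-adjoint linear map; surjectivity of $\beta$ is immediate because $A \to A_{F_1 \cap F_2}$ is onto, so any class is hit by an element of the form $(a + I_{F_1}, 0)$.

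For exactness at $A_{F_1 \cup F_2}$, i.e. injectivity of $\alpha$, I would show $I_{F_1} \cap I_{F_2} = I_{F_1 \cup F_2}$, since $\ker \alpha = (I_{F_1} \cap I_{F_2})/I_{F_1 \cup F_2}$. The inclusion $I_{F_1 \cup F_2} \subseteq I_{F_1} \cap I_{F_2}$ is one of the containments noted above. For the reverse, take $a \in I_{F_1} \cap I_{F_2}$ and let $\{\phi^1_\lambda\}, \{\phi^2_\mu\}$ be the approximate units for $C_0(M \setminus F_1)$ and $C_0(M \setminus F_2)$ described before Lemma \ref{tech}. Each product $\phi^1_\lambda \phi^2_\mu$ vanishes on $F_1 \cup F_2$ and lies in $C_0(M \setminus (F_1 \cup F_2))$, so $\phi^1_\lambda \phi^2_\mu \cdot a \in I_{F_1 \cup F_2}$; and the estimate $\| a - \phi^1_\lambda \phi^2_\mu \cdot a \| \leq \| a - \phi^1_\lambda \cdot a\| + \| \phi^1_\lambda \cdot (a - \phi^2_\mu \cdot a)\| \leq \|a - \phi^1_\lambda \cdot a\| + \|a - \phi^2_\mu \cdot a\|$, using $\|\phi^1_\lambda\|_\infty \leq 1$ together with $\phi^1_\lambda \cdot a \to a$ (valid because $a \in I_{F_1}$) and $\phi^2_\mu \cdot a \to a$ (because $a \in I_{F_2}$), shows $\phi^1_\lambda \phi^2_\mu \cdot a \to a$. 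As $I_{F_1 \cup F_2}$ is closed, $a \in I_{F_1 \cup F_2}$, giving the reverse inclusion.

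Exactness at $A_{F_1 \cap F_2}$ is the surjectivity of $\beta$ already observed, so the heart of the argument is exactness at $A_{F_1} \oplus A_{F_2}$, which is where Lemma \ref{tech} enters. The containment $\operatorname{im} \alpha \subseteq \ker \beta$ is the one-line computation $\beta \alpha(a + I_{F_1 \cup F_2}) = (a - a) + I_{F_1 \cap F_2} = 0$. For $\ker \beta \subseteq \operatorname{im} \alpha$, suppose $(a + I_{F_1},\, b + I_{F_2}) \in \ker \beta$, so that $a - b \in I_{F_1 \cap F_2}$. Applying Lemma \ref{tech} to $a - b$ produces $g \in I_{F_1}$ and $h \in I_{F_2}$ with $(a - b) - g - h \in I_{F_1 \cup F_2}$. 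I would then set $d := a - g$ and check that $\alpha$ sends the class of $d$ to $(a + I_{F_1},\, b + I_{F_2})$: the first coordinate is correct because $g \in I_{F_1}$, and the second because $a - g - b = \big((a - b) - g - h\big) + h \in I_{F_2}$, using $I_{F_1 \cup F_2} \subseteq I_{F_2}$ and $h \in I_{F_2}$. This establishes $\ker \beta = \operatorname{im} \alpha$ and completes the proof.

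I expect the main obstacle to be precisely the inclusion $\ker \beta \subseteq \operatorname{im} \alpha$: realizing a class in the kernel by a single global element of $A_{F_1 \cup F_2}$ requires the simultaneous $I_{F_1}$/$I_{F_2}$ splitting furnished by Lemma \ref{tech}, which itself rests on the partition-of-unity manipulation of the approximate units for the $C_0(M \setminus F_i)$. The remaining verifications (well-definedness from the ideal inclusions, surjectivity of $\beta$, and the approximate-unit argument for injectivity) are routine.
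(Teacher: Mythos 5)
Your proof is correct, and its skeleton is the same as the paper's: the same two maps, the same reduction of injectivity to the identity $I_{F_1} \cap I_{F_2} = I_{F_1 \cup F_2}$, the same one-line surjectivity of the difference map, and the same use of Lemma \ref{tech} to correct a pair $([c]_{F_1}, [d]_{F_2}) \in \ker \beta$ by elements $g \in I_{F_1}$, $h \in I_{F_2}$ so that it becomes the image of a single class in $A_{F_1 \cup F_2}$. Where you genuinely diverge is the containment $I_{F_1} \cap I_{F_2} \subseteq I_{F_1 \cup F_2}$. The paper, having chosen $\lambda, \mu$ with $\|a - \phi^1_\lambda \cdot a\| < \epsilon$ and $\|a - \phi^2_\mu \cdot a\| < \epsilon$, sets $\delta = \epsilon/\|a\|$, takes a Urysohn function $\chi$ equal to $1$ on $F_2$ and $0$ off the open set $U = \{m : |\phi^2_\mu(m)| < \delta\}$, and shows that $\phi_\epsilon := \phi^1_\lambda - \chi \phi^1_\lambda$ lies in $C_0(M \setminus (F_1 \cup F_2))$ and moves $a$ by less than $4\epsilon$. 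You instead multiply the two approximate units and use contractivity and multiplicativity of the $C_0(M)$-action to get $\|a - \phi^1_\lambda \phi^2_\mu \cdot a\| \leq \|a - \phi^1_\lambda \cdot a\| + \|a - \phi^2_\mu \cdot a\|$, with $\phi^1_\lambda \phi^2_\mu$ manifestly vanishing on $F_1 \cup F_2$. Your version is shorter and slightly more general: it needs no Urysohn function separating the closed, possibly non-compact, set $F_2$ from the complement of $U$ (a step that tacitly uses normality of $M$, automatic in the paper's second countable applications but not for a bare locally compact Hausdorff space), and it avoids dividing by $\|a\|$. Both versions rest on the same standard fact that $\{\phi^1_\lambda\}$ and $\{\phi^2_\mu\}$ act as approximate units on $I_{F_1}$ and $I_{F_2}$ respectively. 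Finally, your remark that $\beta$ is only a self-adjoint linear map, not a $*$-homomorphism, is faithful to the paper, whose map $\psi$ is the same difference map.
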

\begin{proof}
We begin by showing that $I_{F_1 \cup F_2} := \overline{C_0(M \backslash (F_1 \cup F_2)) \cdot A} 
=I_{F_1} \cap I_{F_2}$. 
The containment $I_{F_1 \cup F_2} \subseteq I_{F_1} \cap I_{F_2}$ follows immediately from the definitions; we will prove the other containment.  To that end, suppose $a \in I_{F_1} \cap I_{F_2}$.  Let $\{\phi^1_\lambda\}_\lambda, \{\phi^2_\mu\}_\mu$ be the approximate units for $C_0(M \backslash F_1), C_0(M \backslash F_2)$ respectively that were used in Lemma \ref{tech},  and fix $\epsilon > 0$. 
Then  there exist $\lambda, \mu$ such that  $\|a - \phi^1_\lambda \cdot a\| < \epsilon$ and $\| a - \phi^2_\mu \cdot a\| < \epsilon$; consequently, $\| \phi^1_\lambda \cdot a  - \phi^2_\mu \cdot a\| < 2\epsilon$.

Let $\delta = \epsilon/\|a\|$.  We will now construct $\phi_\epsilon \in C_0(M \backslash (F_1 \cup F_2))$ such that $\| a - \phi_\epsilon \cdot a \| < 4\epsilon$, thus showing that $a \in I_{F_1 \cup F_2}$ as claimed.

 The open set $U = \{m \in M: |\phi^2_\mu(m)| < \delta\}$ contains $F_2$; let $\chi \in C_0(M)$ be a bump function that is 1 on $F_2$ and 0 off $U$.  Then $\phi_\epsilon:=\phi^1_\lambda - \chi \phi^1_\lambda \in C_0(M \backslash (F_1 \cup F_2))$.  Moreover,
 \begin{align*}
 \|a - (\phi^1_\lambda - \chi \phi^1_\lambda) \cdot a\| &\leq \| a - (\phi^1_\lambda - \chi \phi^1_\lambda) \cdot a - (\chi \phi^2_\mu) \cdot a\| + \| (\chi \phi^2_\mu)\cdot  a\| \\
 &\leq \| a - \phi^1_\lambda \cdot a\| + \| \chi\cdot (\phi^1_\lambda \cdot a - \phi^2_\mu\cdot a)\| + \|(\chi \phi^2_\mu ) \cdot a\| \\
 & < \epsilon + 2\epsilon + \delta \| a\| \\
 &= 4\epsilon,
 \end{align*}
since $\chi \phi^2_\mu$ is only nonzero on $U \backslash F_2$, where its maximum modulus is  at most $\delta = \epsilon/\|a\|$.
Since $\epsilon > 0$ was arbitrary, $I_{F_1} \cap I_{F_2} = I_{F_1 \cup F_2}$ as claimed.

Thus, the map $\phi: A_{F_1 \cup F_2} \to A_{F_1} \oplus A_{F_2}$ given by 
\[\phi([a]_{F_1 \cup F_2}) = [a]_{F_1} \oplus [a]_{F_2}\]
is a well-defined, injective $*$-homomorphism.  
Similarly, the map $\psi: A_{F_1} \oplus A_{F_2} \to A_{F_1 \cap F_2}$ given by 
\[\psi( [a]_{F_1} \oplus [b]_{F_2}) = [a-b]_{F_1 \cap F_2}\]
is well-defined and onto, since $I_{F_i} \subseteq I_{F_1 \cap F_2}$ for $i=1,2$.
Since  $\text{Im } \phi \subseteq \ker \psi$ by definition, in order to see that the sequence \eqref{mv} is exact, we merely need to check that $\text{Im } \phi \supseteq \ker \psi$.  The proof of this inclusion relies on Lemma \ref{tech}.

Suppose $[c]_{F_1} \oplus [d]_{F_2} \in \ker \psi$.  Then $c-d \in I_{F_1 \cap F_2}$, so by Lemma \ref{tech}, there exist $g \in I_{F_1}, h \in I_{F_2}$ such that $(c-d) - g - h \in I_{F_1 \cup F_2}$.  In other words, 
\[[c-g]_{F_1 \cup F_2} = [h+d]_{F_1 \cup F_2} =: [b]_{F_1 \cup F_2}.\]
  Since $g \in I_{F_1}, h \in I_{F_2}$, we know $[c-g]_{F_1} = [c]_{F_1}$ and $[h+d]_{F_2} = [d]_{F_2}$.  To sum up, if $\psi([c]_{F_1} \oplus [d]_{F_2}) =  0$, then 
  \[[c]_{F_1} \oplus [d]_{F_2} = [c-g]_{F_1} \oplus [d+h]_{F_2} = [b]_{F_1} \oplus [b]_{F_2} = \phi([b]_{F_1 \cup F_2}),\]
  and $\ker \psi \subseteq \text{Im } \phi$ as claimed.
In other words, the sequence \eqref{mv} is  exact.
 \end{proof}

\section{Mayer-Vietoris}
In this section, we will translate the results about $C_0(M)$-algebras obtained in the previous section into statements about the $K$-theory groups $K_*(C^*(\G, \omega))$ of the $C^*$-algebras of twisted group bundles.


$K$-theory (cf. \cite{wegge-olsen, rordam, blackadar}) is a covariant,  $\Z_2$-graded homotopy-invariant functor from the category of $C^*$-algebras to the category of abelian groups.  In plain language, this means that $K$-theory associates to each $C^*$-algebra $A$ a pair of abelian groups, $K_0(A)$ and $K_1(A)$.  The $K$-theory groups are constructed from equivalence classes of projections  in certain $C^*$-algebras associated to $A$, and $*$-homomorphisms $A \to B$ of $C^*$-algebras induce homomorphisms $K_*(A) \to K_*(B)$ in such a way that homotopic $*$-homomorphisms induce the same map on $K$-theory.

Among the many useful properties of $K$-theory is the so-called ``continuity of $K$-theory'' (cf.~\cite{wegge-olsen} Proposition 6.2.9), which implies 
that \begin{equation}
\label{cont-kthy}
K_*(\oplus_{n \in \N} A_n) = \oplus_{n \in \N} K_*(A_n).\end{equation}
Also relevant to our discussion in this article  is the 6-term exact sequence in $K$-theory (cf.~\cite{blackadar} Theorem 9.3.1, \cite{rordam} Theorem 12.1.2): 
any short exact sequence of $C^*$-algebras 
\[0 \to J \to A \to B \to 0\]
induces a 6-term exact sequence of the $K$-groups 
\begin{equation*} 
 \begin{tikzpicture}
\node (F-0) at (-4,2){$K_0(J)$};
\node (F1+F2-0) at (0,2){$K_0(A)$};
\node (F1F2-0) at (4,2){$K_0(B)$};
\node (F-1) at (4,0){$K_1(J)$};
\node(F1+F2-1) at (0,0){$K_1(A)$};
\node(F1F2-1) at (-4,0){$K_1(B)$};
\begin{scope}[->,  line width=0.7]
			\draw  (F-0) to  (F1+F2-0); 
			\draw (F1+F2-0) to  (F1F2-0);
			\draw (F1F2-0) to (F-1);
			\draw (F-1) to (F1+F2-1);
			\draw (F1+F2-1) to (F1F2-1);
			\draw (F1F2-1) to (F-0);
\end{scope}
\end{tikzpicture} 
\end{equation*}
Thus, the short exact sequence \eqref{mv} gives rise to the following 6-term exact sequence in $K$-theory: \begin{equation} 
\label{kthy-mv}
 \begin{tikzpicture}
\node (F-0) at (-4,2){$K_0(A_{F_1 \cup F_2})$};
\node (F1+F2-0) at (0,2){$K_0(A_{F_1}) \oplus K_0(A_{F_2})$};
\node (F1F2-0) at (4,2){$K_0(A_{F_1 \cap F_2})$};
\node (F-1) at (4,0){$K_1(A_{F_1 \cup F_2})$};
\node(F1+F2-1) at (0,0){$K_1(A_{F_1}) \oplus K_1(A_{F_2})$};
\node(F1F2-1) at (-4,0){$K_1(A_{F_1 \cap F_2})$};
\begin{scope}[->,  line width=0.7]
			\draw  (F-0) to  (F1+F2-0); 
			\draw (F1+F2-0) to  (F1F2-0);
			\draw (F1F2-0) to (F-1);
			\draw (F-1) to (F1+F2-1);
			\draw (F1+F2-1) to (F1F2-1);
			\draw (F1F2-1) to (F-0);
\end{scope}
\end{tikzpicture} 
\end{equation}
Since $C^*(\G, \omega)$ is a $C_0(M)$-algebra whenever $\pi: \G \to M$ is a group bundle, we propose to use this diagram to study the $K$-theory groups associated to a homotopy of 2-cocycles on $\G$.  The following definition is a special case of \cite{transf-gps} Definition 2.11.

Given a group bundle $\pi: \G \to M$, we can construct  the associated group bundle $\tilde{\pi}: \G \times [0,1] \to M \times [0,1]$, which has total space $ \G \times [0,1]$, and fiber $\pi\inv(m)$ over $(m,t) \in M \times [0,1]$ for any $t \in [0,1]$.

\begin{defn}
A \emph{homotopy of 2-cocycles} on a group bundle $\pi: \G \to M$ is a 2-cocycle $\Omega$ on the group bundle $\tilde{\pi}: \G \times [0,1] \to M \times [0,1]$.
\end{defn}

  Observe that a homotopy $\Omega$ of 2-cocycles gives rise to a family $\{\omega_t\}_{t\in[0,1]}$ of 2-cocycles on the original group bundle $ \G \to M$, which varies continuously in $t$ thanks to the continuity of $\Omega$.
\begin{example}
Suppose that $\pi: V \to M$ is a vector bundle and that $\sigma: V\2 \to \R$ is a 2-form on $V$.  The function $\Omega: V\2 \times [0,1] \to \T$ given by  
\[ \Omega((v,w,t)) = \exp^{2\pi i t \sigma(v,w)}\]
is a homotopy of 2-cocycles on $V$, with 
\[\omega_0(v,w) = 1; \qquad \omega_1(v,w) = \exp^{2\pi i \sigma(v,w)}.\]
\end{example}

If $\Omega = \{\omega_t\}_{t\in[0,1]}$ is a homotopy of 2-cocycles on $\G$, we have a natural $*$-homomorphism 
\[Q_t: C^*(\G \times [0,1], \Omega) \to C^*(\G, \omega_t)\]
for any $t \in [0,1]$, which is given on the dense subalgebra $C_c(\G\times [0,1])$ by evaluation at $t$.  Observe that if $F\subseteq M$ is closed, then 
\[Q_t \circ q_{F \times [0,1]} = q_F \circ Q_t,\]
since this equality evidently holds on the dense subalgebra $C_c(\G \times [0,1])$, and hence holds in general.
Consequently, the diagrams \eqref{kthy-mv} for the algebras 
\[ A = C^*(\G \times [0,1], \Omega), A^t= C^*(\G, \omega_t)\]
 can be connected into a larger commutative diagram:
\begin{equation} 
\label{big-kthy-mv}
 \begin{tikzpicture}
\node (F-0) at (-5,4){$K_0(A_{F_1 \cup F_2})$};
\node (F1+F2-0) at (0,4){$K_0(A_{F_1}) \oplus K_0(A_{F_2})$};
\node (F1F2-0) at (5,4){$K_0(A_{F_1 \cap F_2})$};
\node (F-1) at (5,0){$K_1(A_{F_1 \cup F_2})$};
\node(F1+F2-1) at (0,0){$K_1(A_{F_1}) \oplus K_1(A_{F_2})$};
\node(F1F2-1) at (-5,0){$K_1(A_{F_1 \cap F_2})$};
\begin{scope}[->,  line width=0.7]
			\draw  (F-0) to  (F1+F2-0); 
			\draw (F1+F2-0) to  (F1F2-0);
			\draw (F1F2-0) to (F-1);
			\draw (F-1) to (F1+F2-1);
			\draw (F1+F2-1) to (F1F2-1);
			\draw (F1F2-1) to (F-0);
\end{scope}

\node (F-0t) at (-3,3){$K_0(A^t_{F_1 \cup F_2})$};
\node (F1+F2-0t) at (0,3){$K_0(A^t_{F_1}) \oplus K_0(A^t_{F_2})$};
\node (F1F2-0t) at (3,3){$K_0(A^t_{F_1 \cap F_2})$};
\node (F-1t) at (3,1){$K_1(A^t_{F_1 \cup F_2})$};
\node(F1+F2-1t) at (0,1){$K_1(A^t_{F_1}) \oplus K_1(A^t_{F_2})$};
\node(F1F2-1t) at (-3,1){$K_1(A^t_{F_1 \cap F_2})$};
\begin{scope}[->,  line width=0.7]
			\draw  (F-0t) to  (F1+F2-0t); 
			\draw (F1+F2-0t) to  (F1F2-0t);
			\draw (F1F2-0t) to (F-1t);
			\draw (F-1t) to (F1+F2-1t);
			\draw (F1+F2-1t) to (F1F2-1t);
			\draw (F1F2-1t) to (F-0t);
			\draw (F-0) to (F-0t);
			\draw (F1+F2-0) to (F1+F2-0t);
			\draw (F1F2-0) to (F1F2-0t);
			\draw (F-1) to (F-1t);
			\draw (F1+F2-1) to (F1+F2-1t);
			\draw (F1F2-1) to (F1F2-1t);
\end{scope}
\end{tikzpicture} 
\end{equation}
where all of the arrows connecting the inner and outer diagrams arise from the map $Q_t$.

\begin{thm} 
\label{main}
Let $\G \to M$ be a second countable, locally trivial, amenable group bundle, with $\Omega=\{\omega_t\}_{t\in[0,1]}$ a homotopy of 2-cocycles on $\G$.  Then 
\[K_*(C^*(\G, \omega_t)) \cong K_*(C^*(\G \times [0,1], \Omega))\]
for any $t \in [0,1]$.
\end{thm}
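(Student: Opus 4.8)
The plan is to prove the isomorphism by an induction on an open cover of $M$ combined with the Mayer-Vietoris machinery and the already-constructed commutative diagram \eqref{big-kthy-mv}, with the base case handled by an appeal to \cite{transf-gps} Theorem 5.1. The strategy is to show that the evaluation map $Q_t$ induces an isomorphism $K_*(C^*(\G \times [0,1], \Omega)_F) \cong K_*(C^*(\G, \omega_t)_F)$ for every closed $F \subseteq M$ of a suitable form, and then to bootstrap from small pieces to all of $M$.

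First I would establish the local base case. By Proposition \ref{c0-m}, for a closed $F \subseteq M$ we have $A_F = C^*(\G|_F \times [0,1], \Omega)$ and $A^t_F = C^*(\G|_F, \omega_t)$. When $F$ is contained in a trivializing neighborhood $U$ (so that $\G|_F \cong F \times G$ for the fixed amenable fiber group $G$), the twisted $C^*$-algebra $C^*(\G|_F, \omega_t)$ is of the form handled in \cite{transf-gps}: a bundle of twisted group algebras over the amenable group $G$, to which Theorem 5.1 of \cite{transf-gps} applies to give that the homotopy $\Omega$ induces an isomorphism
\[K_*(C^*(\G|_F \times [0,1], \Omega)) \cong K_*(C^*(\G|_F, \omega_t)).\]
Concretely, this says that $Q_t$ induces an isomorphism $K_*(A_F) \cong K_*(A^t_F)$ whenever $F$ lies in a single trivializing chart. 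The amenability hypothesis enters here to identify full and reduced twisted $C^*$-algebras and to make the results of \cite{transf-gps} applicable.

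Next I would run the Mayer-Vietoris induction. Cover $M$ by trivializing open sets and, using second countability and local compactness, reduce to building $M$ out of finitely many (and then, via continuity \eqref{cont-kthy}, countably many) closed pieces each contained in a chart. The inductive step takes two closed sets $F_1, F_2$ for which $Q_t$ is already known to induce $K$-theory isomorphisms on $A_{F_1}, A_{F_2}$, and $A_{F_1 \cap F_2}$, and concludes the same for $A_{F_1 \cup F_2}$. This is precisely where diagram \eqref{big-kthy-mv} does the work: the two Mayer-Vietoris six-term sequences (Proposition \ref{mv-prop}) for $A$ and for $A^t$ sit in a commutative ladder joined by the vertical $Q_t$-maps, five of which are isomorphisms by the inductive hypothesis, so the Five Lemma forces the sixth vertical map $K_*(A_{F_1 \cup F_2}) \to K_*(A^t_{F_1 \cup F_2})$ to be an isomorphism as well. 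Taking $F = M$ (writing $A_M = A$, $A^t_M = A^t$) then yields the theorem.

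The main obstacle I anticipate is the passage from finite to infinite covers, i.e.\ exhausting a non-compact $M$ that requires infinitely many charts. The finite Mayer-Vietoris induction only directly handles $M$ that is a finite union of chart-sized closed pieces; for general second countable $M$ one must write $M$ as an increasing union of such finite unions and argue that the $K$-theory isomorphism passes to the limit, invoking continuity of $K$-theory \eqref{cont-kthy} together with compatibility of the $C_0(M)$-algebra fibering with these exhaustions. Ensuring that the direct-limit $C^*$-algebras are correctly identified with $A$ and $A^t$, and that the $Q_t$ maps are compatible with the limit, is the delicate bookkeeping step; the local triviality and second countability hypotheses are exactly what make this limiting argument go through.
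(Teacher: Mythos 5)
Your base case and the finite Mayer--Vietoris induction are exactly the paper's argument: compact trivializing neighborhoods $V_m$, viewed as transformation groups with trivial $G$-action, amenability used both to identify full and reduced algebras and to invoke Baum--Connes with coefficients, then Theorem 5.1 of \cite{transf-gps} for the pieces and the Five Lemma applied to the ladder \eqref{big-kthy-mv} to assemble them. For compact $M$ (finitely many charts) this is a complete proof.

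The gap is in your treatment of non-compact $M$, and it is not mere bookkeeping. The increasing-exhaustion argument you sketch does not go through as stated, because the maps you control point the wrong way: Proposition \ref{c0-m} realizes $C^*(\G|_F,\omega)$ as a \emph{quotient} $A_F$ of $A$, so for $\overline{U_i} \subseteq \overline{U_{i+1}}$ the natural map is the restriction $A_{\overline{U_{i+1}}} \to A_{\overline{U_i}}$. An exhaustion by compacta therefore produces an \emph{inverse} system of the algebras for which you have isomorphisms, not a direct system, and $K$-theory does not commute with inverse limits; there is no natural identification of $A$ with a direct limit of the quotients $A_{\overline{U_i}}$. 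To manufacture a genuine inductive limit you would have to pass to the ideals $J_i = \overline{C_0(U_i)\cdot A}$, whose increasing union is dense in $A$ by nondegeneracy of the $C_0(M)$-action; but your inductive hypothesis concerns quotients over compact sets, so you would then need an additional excision step (the six-term sequence of $0 \to J_i \to A_{\overline{U_i}} \to A_{\partial U_i} \to 0$ together with the Five Lemma) to transfer the isomorphism to $K_*(J_i)$, and you would need continuity of $K$-theory for inductive limits, not just the direct-sum statement \eqref{cont-kthy} quoted in the paper. The paper sidesteps all of this with a different decomposition: writing $M = \bigcup_i U_i$ with $\overline{U_i}$ compact and (without loss of generality) $\partial U_i \cap \partial U_{i+1} = \emptyset$, it sets $F_1 = \bigcup_i \overline{U_{2i+1}}\setminus U_{2i}$ and $F_2 = \bigcup_i \overline{U_{2i}}\setminus U_{2i-1}$, so that $F_1$, $F_2$, and $F_1 \cap F_2 = \bigcup_i \partial U_i$ are closed sets, each a disjoint union of countably many compacta. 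The algebras over these sets are then honest countable direct sums, so \eqref{cont-kthy} applies verbatim together with the compact case, and one final application of Proposition \ref{mv-prop} and the Five Lemma to $M = F_1 \cup F_2$ finishes the proof --- no limiting argument is ever needed.
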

\begin{proof}
We begin by considering the case when $M$ is compact.

For each $m \in M$, let $V_m$ be a compact neighborhood of $m$ such that $\G$ trivializes over $V_m$.  Then $\G$ also trivializes over $V_m \cap V_n$.  In other words, $\G|_{V_m}$ and $\G|_{V_m \cap V_n}$ are transformation groups over compact spaces (with the trivial action of the group  $\G_m \cong G$ on the spaces $V_m, V_n, V_m \cap V_n \subseteq M$).  By hypothesis, $\G|_{V_m}$ is a bundle of amenable groups, and so Theorem 3.5
 of \cite{renault-2013} tells us that $\G|_{V_m}$ is an amenable groupoid; in other words,
\[A_{V_m} = C^*(\G|_{V_m} \times [0,1], \Omega) \cong C^*_r(\G|_{V_m} \times [0,1], \Omega).\]
Theorem 5.1 of \cite{transf-gps} states that a homotopy $\Omega = \{\omega_t\}_{t\in[0,1]}$ of 2-cocycles on a second countable locally compact transformation group $G \ltimes X$ induces an isomorphism 
\[(Q_t)_*: K_*(C^*_r(G \ltimes X \times [0,1], \Omega)) \to K_*(C^*_r(G \ltimes X, \omega_t))\]
for any $t \in [0,1]$, as long as  $G$ satisfies the Baum-Connes conjecture with coefficients.  Applying this result to the case $G = \G_m$ and $X = V_m$, since amenable groups satisfy the Baum-Connes conjecture with coefficients,
we see that $Q_t: A_{V_m} \to A^t_{V_m}$ induces an isomorphism on $K$-theory, and that the same is true for $Q_t: A_{V_n} \to A^t_{V_n}$ and $Q_t: A_{V_m \cap V_n} \to A^t_{V_m \cap V_n}$.  The Five Lemma now implies that $Q_t: A_{V_m \cup V_n} \to A^t_{V_m \cup V_n}$ induces a $K$-theoretic equivalence.

Iterating this procedure, since the compactness of $M$ implies the existence of a finite cover of $M$ consisting of the sets $V_m$, we see that $Q_t$ induces an isomorphism 
\[(Q_t)_*: K_*(C^*(\G \times [0,1], \Omega)) \to K_*(C^*(\G, \omega_t)),\]
as claimed.

For the general case, when $M$ is not compact, write $M = \cup_{i=1}^\infty U_i$, where $U_i \subseteq U_{i+1} $ and $\overline{U_i}$ is compact for all $i$.  (We are indebted to Nigel Higson for suggesting this argument.) Then $M= F_1 \cup F_2$, where 
\[F_1 = \cup_{i=0}^\infty \overline{U_{2i+1}} \backslash U_{2i}; \qquad F_2 = \cup_{i=1}^\infty \overline{U_{2i}} \backslash U_{2i -1}.\]
Without loss of generality, we may assume that $\partial U_i \cap \partial U_{i+1} = \emptyset \ \forall \ i$;
then $F_1, F_2$, and $F_1 \cap F_2 = \cup_{i=1}^\infty \partial U_i$ are each closed sets, consisting of countably many disjoint compact sets.

Consequently, for $F = F_1, F_2, F_1 \cap F_2$, we see that $C^*(\G|_F \times [0,1], \Omega)$ and $C^*(\G|_F, \omega_t)$ both break up as a countable direct sum 
\[C^*(\G|_F \times [0,1],\Omega) \cong \bigoplus_{n=0}^\infty C^*(\G|_{F^n} \times [0,1],\Omega); \qquad C^*(\G|_F, \omega_t) \cong \bigoplus_{n=0}^\infty C^*(\G|_{F^n}, \omega_t)\]
where $F^n$ is compact for all $n$.  Since we established above that for a compact set $F^n$,  
\[(Q_t)_*: K_*(C^*(\G|_{F^n} \times [0,1], \Omega)) \to K_*(C^*(\G|_{F^n}, \omega_t))\]
is an isomorphism for all $t \in [0,1]$, it follows from \eqref{cont-kthy} that 
\[(Q_t)_*: K_*(C^*(\G|_{F} \times [0,1], \Omega)) \to K_*(C^*(\G|_{F}, \omega_t))\]
is also an isomorphism for $F = F_1, F_2, F_1 \cap F_2$.  Since $M = F_1 \cup F_2$, the short exact sequence of \eqref{mv} combines with the Five Lemma (following the same argument given above in the case $M$ is compact) to tell us that 
\[(Q_t)_*: K_*(C^*(\G \times [0,1], \Omega)) \to K_*(C^*(\G, \omega_t))\]
is also an isomorphism.  This finishes the proof of Theorem \ref{main}.
\end{proof}

Our main result now follows immediately:

\begin{cor}
\label{main-cor}
Any homotopy $\Omega = \{\omega_t\}_{t \in [0,1]}$ of 2-cocycles on a second countable, locally trivial amenable group bundle $\G \to M $ induces an isomorphism 
\[K_*(C^*(\G, \omega_0)) \cong K_*(C^*(\G, \omega_1)).\]
\end{cor}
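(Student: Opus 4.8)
The plan is to deduce the corollary directly from Theorem \ref{main}, which is the substantive result of the paper; once that theorem is available, no further analysis of the group-bundle structure, the amenability hypothesis, or the Mayer--Vietoris machinery is required. The key observation driving the argument is that Theorem \ref{main} produces, for \emph{each} fixed $t \in [0,1]$, an isomorphism between $K_*(C^*(\G, \omega_t))$ and the $K$-theory of the single, $t$-independent algebra $C^*(\G \times [0,1], \Omega)$. Thus both endpoints of the homotopy are simultaneously comparable to one common reference object, and the desired isomorphism will be obtained by routing through that object.

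Concretely, I would first invoke Theorem \ref{main} at $t = 0$ to obtain an isomorphism
\[(Q_0)_*: K_*(C^*(\G \times [0,1], \Omega)) \xrightarrow{\ \cong\ } K_*(C^*(\G, \omega_0)),\]
and then invoke the same theorem at $t = 1$ to obtain
\[(Q_1)_*: K_*(C^*(\G \times [0,1], \Omega)) \xrightarrow{\ \cong\ } K_*(C^*(\G, \omega_1)).\]
Both maps are isomorphisms, and crucially they share the same domain, namely the $K$-theory of $C^*(\G \times [0,1], \Omega)$. Composing the second map with the inverse of the first then yields
\[(Q_1)_* \circ (Q_0)_*\inv: K_*(C^*(\G, \omega_0)) \xrightarrow{\ \cong\ } K_*(C^*(\G, \omega_1)),\]
which is the claimed isomorphism. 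This is the entire argument.

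I should be candid that there is no genuine obstacle remaining at this stage: every difficulty has already been absorbed into Theorem \ref{main}, whose proof combines the $C_0(M)$-algebra structure of Proposition \ref{c0-m}, the amenability hypothesis (used both to identify the full and reduced twisted algebras via Theorem 3.5 of \cite{renault-2013} and to invoke Theorem 5.1 of \cite{transf-gps} through Baum--Connes with coefficients), and the Mayer--Vietoris patching together with continuity of $K$-theory. The only point worth flagging is conceptual rather than technical: the two endpoints $\omega_0$ and $\omega_1$ must be the restrictions of a \emph{single} 2-cocycle $\Omega$ on $\G \times [0,1]$, so that the intermediate algebra $C^*(\G \times [0,1], \Omega)$ is well defined and can serve as the bridge between the two fibers. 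This is exactly what the definition of a homotopy of 2-cocycles guarantees, so the corollary follows immediately.
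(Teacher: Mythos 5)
Your proposal is correct and is exactly the paper's argument: the paper derives Corollary \ref{main-cor} immediately from Theorem \ref{main} by applying it at $t=0$ and $t=1$ and composing the resulting isomorphisms through the common object $K_*(C^*(\G \times [0,1], \Omega))$. Your flagged conceptual point --- that $\omega_0$ and $\omega_1$ must come from a single cocycle $\Omega$ on $\G \times [0,1]$ --- is likewise precisely what the paper's definition of a homotopy of 2-cocycles ensures.
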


When we consider the particular case when $V \to M$ is a vector bundle, we obtain the following generalization of Theorem 1 from \cite{plymen-weyl-bdl}:
\begin{cor}
Let $V \to M$ be a vector bundle, and let $\sigma: V\2 \to \R$ be a bilinear 2-form on $V$.  Setting $\omega (v,w) = e^{2\pi i \sigma(v,w)}$, we have 
\[K_*(C^*(V, \omega)) \cong K_*(C^*(V)) = K_*(C_0(V^*)).\]
In particular, if $V$ is even dimensional, then 
\[K_*(C^*(V, \omega)) \cong K_*(C_0(M)).\]
\end{cor}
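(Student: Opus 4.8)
The plan is to extract the first isomorphism directly from Corollary \ref{main-cor}, then identify the untwisted algebra via a fiberwise Fourier transform, and finally invoke the Thom isomorphism for the even-dimensional case. First I would observe that a real vector bundle $\pi: V \to M$ is exactly a locally trivial group bundle whose fiber is the additive group $\R^n$; since $\R^n$ is abelian it is amenable, so (assuming $M$ is second countable) $V$ meets all the hypotheses of Corollary \ref{main-cor}. The $2$-form $\sigma$ supplies the homotopy $\Omega = \{\omega_t\}_{t \in [0,1]}$ with $\omega_t(v,w) = e^{2\pi i t \sigma(v,w)}$, where $\omega_0 \equiv 1$ is the trivial cocycle and $\omega_1 = \omega$. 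Corollary \ref{main-cor} then gives at once
\[K_*(C^*(V, \omega)) = K_*(C^*(V, \omega_1)) \cong K_*(C^*(V, \omega_0)) = K_*(C^*(V)),\]
the last equality because the trivial cocycle recovers the ordinary convolution algebra $C^*(V)$.

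Next I would establish the identification $C^*(V) \cong C_0(V^*)$. Fiberwise the relevant algebra is $C^*(\R^n)$, and the group Fourier transform realizes the Gelfand isomorphism $C^*(\R^n) \cong C_0(\widehat{\R^n}) = C_0((\R^n)^*)$. The work is to assemble these pointwise transforms into a single isomorphism of $C_0(M)$-algebras. By Proposition \ref{c0-m} the fibers of $C^*(V)$ are precisely the $C^*(V_m) \cong C_0(V_m^*)$, and I would check that the Fourier transform varies continuously across the local trivializations of $V$, so that it promotes the fiberwise statement to an isomorphism of the section algebras $C^*(V) \cong C_0(V^*)$. Passing to $K$-theory gives $K_*(C^*(V)) = K_*(C_0(V^*))$, which completes the displayed chain of identifications.

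For the final assertion I would apply the Thom isomorphism in $K$-theory to the real vector bundle $V^* \to M$. Identifying $K_*(C_0(V^*))$ with the compactly supported topological $K$-theory of the total space $V^*$, the Thom isomorphism relates it to $K_*(C_0(M))$ up to a degree shift equal to the real rank $\dim_\R V^* = \dim_\R V$; the even-dimensionality hypothesis, together with the $\Z_2$-periodicity of $K$-theory, makes this shift trivial and yields $K_*(C_0(V^*)) \cong K_*(C_0(M))$. I expect this last step to be the main obstacle, not for computational reasons but because the Thom isomorphism in complex $K$-theory genuinely requires a $K$-orientation (a $\mathrm{Spin}^c$ structure) on $V$. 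A clean statement for an arbitrary even-dimensional $V$ therefore needs either such an orientability hypothesis or the observation — as in Plymen's original symplectic setting — that the $2$-form equips $V$ with a compatible complex structure and hence a canonical $K$-orientation. The first two steps, by contrast, are essentially formal consequences of Corollary \ref{main-cor} and the Fourier transform.
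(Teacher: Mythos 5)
Your proposal follows exactly the paper's own route for this corollary: Corollary \ref{main-cor} applied to the homotopy $\omega_t(v,w) = e^{2\pi i t \sigma(v,w)}$ (with $\omega_0$ trivial and $\omega_1 = \omega$), the fiberwise Fourier transform identification $C^*(V) \cong C_0(V^*)$, and the Thom isomorphism in the even-dimensional case --- precisely the argument sketched in the paper's Introduction, which is all the justification the paper itself supplies. Your closing caveat is well placed rather than a defect of your proof: the complex $K$-theory Thom isomorphism genuinely requires a $K$-orientation ($\mathrm{Spin}^c$ structure) on $V^*$, a hypothesis the paper inherits implicitly from Plymen's symplectic setting (where nondegeneracy of $\sigma$ yields a compatible complex structure) but does not state for an arbitrary bilinear $2$-form, so your flagged condition is in fact needed for the final displayed isomorphism to hold in the stated generality.
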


\bibliographystyle{amsplain}
\bibliography{eagbib}
\end{document}